\newcolumntype{C}{>$c<$}
	\pgfplotsset{width=7cm,compat=1.3}
	\pgfplotsset{every linear axis/.append style={
		/pgf/number format/.cd,
		use comma,
		1000 sep={\,},
	}}
\renewcommand{\leq}{\ensuremath{\leqslant}} 
\renewcommand{\geq}{\ensuremath{\geqslant}}
\DeclareMathOperator{\Z}{\mathbb Z}
\DeclareMathOperator{\Lie}{\mathcal L}
\DeclareMathOperator{\Der}{Der}
\DeclareMathOperator{\ad}{ad}
\DeclareMathOperator{\Hom}{Hom}
\DeclareMathOperator{\Aut}{Aut}
\DeclareMathOperator{\Act}{Act}
\newtheorem*{rep@theorem}{\rep@title}
\newcommand{\newreptheorem}[2]{%
\newenvironment{rep#1}[1]{%
 \def\rep@title{#2 \ref{##1}}%
 \begin{rep@theorem}}%
 {\end{rep@theorem}}}
\theoremstyle{plain}
\newtheorem{theo}{Theorem}
\newtheorem{fait}[theo]{Fact} 
\newtheorem{lem}[theo]{Lemma}
\newtheorem{prop}[theo]{Proposition}
\newtheorem{propdef}[theo]{Proposition-definition}
\newtheorem{cor}[theo]{Corollary}
\newcounter{PB}
\newtheorem{pb}[PB]{Problem}
\numberwithin{equation}{theo}
\theoremstyle{definition}
\newtheorem{defi}[theo]{Definition}
\newtheorem{ex}[theo]{Example}
\newtheorem{rmq}[theo]{Remark}
\renewcommand{\k}{\Bbbk}
\newcommand{\Enstq}[2]{\left\{\ #1\ \middle|\ #2\ \right\}}    
\newcommand{\Id}[1][]{\mathbbm 1_#1}                             
\newcommand{\cpx}[1]{#1_*}
\let\oldpagenumbering\pagenumbering
\renewcommand{\pagenumbering}[1]{%
	\cleardoublepage
	\oldpagenumbering{#1}
}
\author{Jacques \scshape{Darn\'e}}
\title{On the Andreadakis problem for subgroups of $IA_n$}
\date{August 20, 2018}
\begin{document}

\maketitle

\begin{abstract}
Let $F_n$ be the free group on $n$ generators. Consider the group $IA_n$ of automorphisms of $F_n$ acting trivially on its abelianization. There are two canonical filtrations on $IA_n$: the first one is its lower central series $\Gamma_*$; the second one is the Andreadakis filtration $\mathcal A_*$, defined from the action on $F_n$. The Andreadakis problem consists in understanding the difference between these filtrations. Here, we show that they coincide when restricted to the subgroup of triangular automorphisms, and to the pure braid group.
\end{abstract}

\section*{Introduction}
\addcontentsline{toc}{section}{Introduction}

The group $\Aut(F_n)$ of automorphisms of a free group has a very rich structure, which is somewhat ill-understood. It is linked to various groups appearing in low-dimensional topology : it contains the mapping class group, the braid group, the loop braid group, etc. By looking at its action on $F_n^{ab} \cong \Z^n$, we can decompose $\Aut(F_n)$ as an extension of $GL_n(\Z)$ by $IA_n$, the latter being the subgroup of automorphisms acting trivially on $\Z^n$. By analogy with the case of the mapping class group, $IA_n$ is known as the \emph{Torelli subgroup} of $\Aut(F_n)$. An explicit finite set of generators of $IA_n$ has been known for a long time \cite{Nielsen} -- see also \cite[5.6]{BBM} and our appendix.
Nevertheless, the structure of $IA_n$ remains largely mysterious. For instance, $IA_3$ is not finitely presented \cite{Krstic}, and it is not known whether $IA_n$ is finitely presented for $n > 3$.

One of the most prominent questions concerning the structure of this Torelli group is the Andreadakis problem. Consider the lower central series $F_n = \Gamma_1(F_n) \supseteq \Gamma_2(F_n) \supseteq\cdots$. From it we can define the \emph{Andreadakis filtration} $IA_n = \mathcal A_1 \supseteq \mathcal A_2 \supseteq \cdots $:  we define $\mathcal A_j$ as the subgroup of automorphisms acting trivially on $F_n/\Gamma_{j+1}(F_n)$ (which is the free nilpotent group of class $j$ on $n$ generators). This filtration of $IA_n$ is central (it is even \emph{strongly central}), so it contains the minimal central filtration on $IA_n$, its lower central series: 
\[\forall k \geq 1,\ \mathcal A_k \supseteq \Gamma_k(IA_n).\] 
\begin{pb}[Andreadakis]\label{pb_Andreadakis}
What is the difference between $\mathcal A_*$ and $\Gamma_*(IA_n)$? 
\end{pb}

Andreadakis conjectured that the filtrations were the same \cite[p.\ 253]{Andreadakis}. In \cite{Bartholdi}, Bartholdi disproved the conjecture, using computer calculations.  He then tried to prove that the two filtrations were the same up to finite index, but in the erratum \cite{Bartholdi1}, he showed that even this weaker statement cannot be true. His latter proof uses the $L$-presentation of $IA_n$ given in \cite{Day-Putman}, to which he applies algorithmic methods described in \cite{BBH} to calculate (using the software GAP) the first degrees of the graded groups associated to each filtration. 

\medskip

The present paper is devoted to the study of the Andreadakis problem when restricted to some subgroups of the Torelli group $IA_n$. Precisely, if $G$ is a subgroup of $IA_n$, we can consider the two filtrations induced on $G$ by our original filtrations, and we can compare them to the lower central series of $G$:
\begin{equation}\label{incl_on_subgroups}
\Gamma_* (G)\ \subseteq\ \Gamma_*(IA_n)\cap G\ \subseteq\ \mathcal A_* \cap G.
\end{equation}
\begin{pb}[Andreadakis problem for subgroups of $IA_n$]\label{Andreadakis_on_subgroups}
For which subgroups $G$ of $IA_n$ are the above inclusions equalities ?
\end{pb} 

\begin{defi}
We say that the \emph{Andreadakis equality} holds for a subgroup $G$ of $IA_n$ when $\Gamma_*(G) = \mathcal A_* \cap G$.
\end{defi}

Obviously, this is not always the case: for instance, the Andreadakis equality does not hold for the cyclic group generated by an element of $\Gamma_2(IA_n)$. However, for some nicely embedded groups, we can hope that it could hold, and it is indeed the case:
\begin{theo}[Th.\ \ref{Andreadakis_for_triangulars}, Cor.\ \ref{Andreadakis_for_triangular_McCool} and Th.\ \ref{Andreadakis_for_braids}]\label{IAn+ et Pn} 
Let $G$ be the subgroup of triangular automorphisms $IA_n^+$, the triangular McCool subgroup $P\Sigma_n^+,$ or the pure braid group $P_n$ acting \emph{via} the Artin action. Then the Andreadakis equality holds for $G$:
\[\cpx\Gamma (G) = \cpx \Gamma(IA_n)\cap G = \mathcal A_* \cap G.\]
\end{theo}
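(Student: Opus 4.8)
All three inclusions in \eqref{incl_on_subgroups} will follow at once if I prove the outer equality $\Gamma_*(G) = \mathcal A_* \cap G$, since the middle filtration is then squeezed between two equal ones. Both $\Gamma_*(G)$ and $\mathcal A_* \cap G$ are strongly central filtrations of $G$ with $\Gamma_k(G) \subseteq \mathcal A_k \cap G$, and a standard comparison argument for nested strongly central filtrations shows that they coincide if and only if the induced morphism of associated graded Lie rings $\gr(\Gamma_*(G)) \to \gr(\mathcal A_* \cap G)$ is injective in every degree. The Johnson morphism embeds $\gr(\mathcal A_* \cap G)$ into the derivation algebra $\Der(\mathcal L_n)$ of the free Lie ring $\mathcal L_n = \gr(F_n)$, so by composition the whole problem reduces, in each of the three cases, to the injectivity of the total Johnson morphism
\[
\tau \colon \gr(G) = \bigoplus_k \Gamma_k(G)/\Gamma_{k+1}(G) \longrightarrow \Der(\mathcal L_n).
\]
This single statement is what I would establish for each $G$.

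For the pure braid group $P_n$ and the triangular McCool group $P\Sigma_n^+$ the crucial input is that each is an \emph{iterated almost-direct product of finitely generated free groups}: the pure-braid fibration gives $P_n = F_{n-1} \rtimes P_{n-1}$, and conjugating the last generator gives $P\Sigma_n^+ = F_{n-1} \rtimes P\Sigma_{n-1}^+$, the base acting trivially on the abelianization of the free fibre in both cases. By the theorem of Falk--Randell, $\gr(G)$ is then the corresponding iterated semidirect product of the free Lie rings of the fibres. I would prove injectivity of $\tau$ by induction on $n$. For $P\Sigma_n^+$ the fibre $F_{n-1}$ maps, under $\tau$, to the derivations $x_n \mapsto [\ell, x_n]$ with $\ell \in \mathcal L_{n-1}$ (all other generators fixed); this is injective because the centraliser of $x_n$ in $\mathcal L_n$ is the single line $\mathbb Z x_n$, which meets $\mathcal L_{n-1}$ trivially. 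The base $P\Sigma_{n-1}^+$ maps to derivations killing $x_n$, so the images of fibre and base are in disjoint support; hence a nonzero $a+b$ (fibre part $a$, base part $b$) cannot lie in $\ker\tau$, and the inductive hypothesis closes the argument. The case of $P_n$ is analogous, the fibre being detected inside $\Der(\mathcal L_n)$ by the same leading-term/centraliser mechanism.

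The triangular group $IA_n^+$ is the genuinely delicate case, and where I expect the main obstacle. The analogous sequence $1 \to K_n \to IA_n^+ \to IA_{n-1}^+ \to 1$, with $K_n$ the automorphisms fixing $x_1,\dots,x_{n-1}$, is \emph{not} an almost-direct product: $K_n$ is not free (already $K_3 \cong \Gamma_2(F_2)\rtimes F_2$) and $IA_{n-1}^+$ acts nontrivially on $H_1(K_n)$, so Falk--Randell no longer applies. Instead I would work directly inside the Lie ring of triangular derivations $\mathfrak h_n = \{D \in \Der(\mathcal L_n) : D(x_i)\in\mathcal L_i \text{ for all } i\}$, into which $\tau$ lands, and filter $K_n$ by the Andreadakis filtration induced from $IA_n^+$, whose associated graded is the ideal $\mathfrak k_n \trianglelefteq \mathfrak h_n$ of derivations supported on $x_n$. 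The plan is to prove a \emph{filtered} analogue of Falk--Randell: although the action of $IA_{n-1}^+$ on $H_1(K_n)$ is nontrivial, it is trivial on the associated graded up to raising the filtration degree, which should force the splitting $\gr(IA_n^+) \cong \mathfrak k_n \rtimes \gr(IA_{n-1}^+)$ and hence, inductively, $\gr(IA_n^+)\cong\mathfrak h_n$. Injectivity of $\tau$ then again follows from the disjoint-support computation, since each $\mathfrak k_i$ is supported on a distinct generator $x_i$.

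The heart of the matter — and the step I expect to be hardest — is making this last reduction rigorous for the non-free kernel: one must show that the lower central series of $IA_n^+$ actually \emph{fills up} the full ideal $\mathfrak k_n$ of $x_n$-supported triangular derivations in every degree, i.e.\ that each such derivation is reached, modulo higher Andreadakis degree, by iterated commutators of the standard triangular generators $K_{ij}$ ($j<i$) and $K_{ijk}$ ($j<k<i$). Because $K_n$ is not free, this generation cannot be read off a free Lie ring as in the McCool case; it amounts precisely to proving that $\mathfrak h_n$ is generated in degree one by the classes of these generators, and then to verifying that the resulting surjection onto $\mathfrak h_n$ is forced to be the isomorphism predicted by the filtered Falk--Randell splitting, so that $\tau$ is injective. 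Carrying out this generation argument, and controlling $\gr(IA_n^+)$ tightly enough in the absence of an almost-direct structure, is the crux of the triangular case.
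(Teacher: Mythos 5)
Your overall strategy --- reducing the Andreadakis equality for $G$ to the injectivity of the Johnson morphism $\tau\colon \Lie(G)\to\Der(\Lie(F_n))$ (this is exactly Lemma \ref{tau_inj}) and then proving injectivity by induction along an iterated almost-direct product --- is the paper's strategy, and your treatment of $P\Sigma_n^+$ is essentially the paper's argument. But your treatment of $IA_n^+$ rests on a false premise. You claim that the extension $1\to K_n\to IA_n^+\to IA_{n-1}^+\to 1$ is \emph{not} almost-direct because ``$IA_{n-1}^+$ acts nontrivially on $H_1(K_n)$''. This is wrong. One has $K_n\cong\Gamma_2(F_{n-1})\rtimes F_{n-1}$, whose lower central series is $\Gamma_{*+1}(F_{n-1})\rtimes\Gamma_*(F_{n-1})$, so that $H_1(K_n)\cong(\Lie_2\oplus\Lie_1)(F_{n-1})$; the action of $IA_{n-1}^+$ on it is the diagonal action coming from its action on $\Lie(F_{n-1})$, and this is \emph{trivial}, because $\Lie(F_{n-1})$ is generated in degree $1$ (Proposition \ref{engdeg1}) and elements of $IA_{n-1}$ fix degree $1$ by definition. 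Note that almost-directness in the sense of Proposition \ref{Gamma(PSD)} only requires triviality of the action on the abelianization of the kernel, \emph{not} freeness of the kernel: you are conflating it with the hypotheses of Falk--Randell. Missing this observation (which is precisely what the paper's proof turns on) leads you to replace a clean induction by a speculative ``filtered Falk--Randell'' program that you explicitly leave unexecuted (``should force'', ``the crux''), and whose target is moreover false: $\gr(IA_n^+)\not\cong\mathfrak h_n$. Indeed, the image of $\tau$ on the $x_n$-supported factor consists only of the derivations $\bar x_n\mapsto[\bar x_n,\bar w]+\bar\gamma$ with $\bar w,\bar\gamma\in\Lie(F_{n-1})$; already in degree $2$ this misses $\bar x_n\mapsto[[\bar x_n,\bar x_1],\bar x_2]$ (the multidegree-$(1,1,1)$ part of $[\bar x_n,\Lie_2(\Z^{n-1})]$ is only the line spanned by $[\bar x_n,[\bar x_1,\bar x_2]]$) as well as every value quadratic in $\bar x_n$. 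So the lower central series can never ``fill up'' your ideal $\mathfrak k_n$, and no generation argument will make it do so.

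There is a second, smaller gap in the braid case. You assert that the fibre of $P_n=P_{n-1}\ltimes F_{n-1}$ is ``detected by the same leading-term/centraliser mechanism'', but the fibre $F_{n-1}=\langle A_{1,n},\dots,A_{n-1,n}\rangle$ acts on $F_n$ through the Artin action --- commutators are computed in $P_{n+1}$, and a generator $A_{j,n}$ moves several of the $x_i$ --- not by conjugating a single free generator, so the disjoint-support and centraliser arguments cannot be run verbatim inside $\Lie(F_n)$. The paper's proof needs an extra device: the second free subgroup $\tilde F_n=\langle A_{1,n},\dots,A_{n-1,n},A_{n,n+1}\rangle$ of $B_{n+1}$ (obtained by exchanging strings $n$ and $n+1$), through which the condition $[w,x_n]\in\Gamma_{j+1}(F_n)\subseteq\Gamma_{j+1}(P_{n+1})$ is transferred into $\Gamma_{j+1}(\tilde F_n)$ via the almost-direct decomposition $P_{n+1}=P_n\ltimes\tilde F_n$, and only then is the centraliser argument (Lemma \ref{word_lemma}) applied. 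Without some such device your $P_n$ case is incomplete. The correct route for $IA_n^+$, once almost-directness is restored, is the paper's: prove the Andreadakis equality directly for the kernel $K_n$ (an explicit free Lie algebra computation showing that $\psi\colon x_n\mapsto x_n^w\gamma$ lies in $\mathcal A_j$ only if $w\in\Gamma_j(F_{n-1})$ and $\gamma\in\Gamma_{j+1}(F_{n-1})$, i.e.\ only if $\psi\in\Gamma_j(K_n)$), check $\mathcal A_*$-disjointness of the two factors (here your support argument is fine), and conclude by induction via Theorem \ref{Reducing_Andreadakis}.
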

The statement about triangular automorphisms has independently been obtained by T. Satoh  \cite{Satoh_triangulaire}.

The subgroup $IA_n^+$ is introduced in Definition \ref{def_triangulaires}. We treat the cases of $IA_n^+$ and $P_n$ in Sections \ref{Section_triangular} and \ref{Section_braids} ; the proof in the case of $P\Sigma_n^+$ is a straightforward adaptation of the proof for $IA_n^+$.  The methods used in both cases are very similar: both use a decomposition as an iterated almost-direct product, and fit in the general framework we introduce in Section \ref{Section_dec}. Sections \ref{Generalities} to \ref{section_exactness} consist mainly of reminders from \cite{Darné}, with some additional material, especially in Paragraph \ref{par_def_actions}, where we present a general adjonction involving semi-direct products, and in Paragraph \ref{par_LCS_of_sdp}, where we write down a description of the lower central series of a semi-direct product of groups.

\bigskip

\noindent
\textbf{Acknowledgements}: This work is part of the author's PhD thesis. The author is indebted to his advisors, Antoine Touz\'e and Aur\'elien Djament, for their constant support, countless helpful discussions, and numerous remarks and comments on earlier versions of the present paper. He also thanks Gwena\"el Massuyeau and Takao Satoh, who kindly agreed to be the reviewers of his thesis, for their useful observations and comments.

\tableofcontents

\numberwithin{theo}{section}
\section{Strongly central filtrations}\label{Generalities}

Throughout the paper, $G$ will denote an arbitrary group,. The left and right action of $G$ on itself by conjugation are denoted respectively by $x^y = y^{-1}xy$ and ${}^y\! x = yxy^{-1}$.
The \emph{commutator} of two elements $x$ and $y$ in $G$ is $[x,y]:= xyx^{-1}y^{-1}$.
If $A$ and $B$ are subsets of $G$, we denote by $[A, B]$ the subgroup generated by the commutators $[a,b]$ with $(a,b) \in A \times B$. 
We denote the \emph{abelianization} of $G$ by $G^{ab}:= G/[G,G]$ and its lower central series by $\Gamma_*(G)$, that is : 
\[G =: \Gamma_1(G) \supseteq [G,G] =: \Gamma_2(G) \supseteq [G,\Gamma_2(G)] =: \Gamma_3(G)\supseteq \cdots\]

We recall the definition of the category $\mathcal{SCF}$ introduced in \cite{Darné}.
\begin{defi}
A strongly central filtration $G_*$ is a nested sequence of groups  $G_1 \supseteq G_2 \supseteq G_3 \cdots$ such that $[G_i, G_j] \subseteq G_{i+j}$ for all $i, j \geq 1$. These filtrations are the objects of a category $\mathcal{SCF}$, where morphisms from $G_*$ to $H_*$ are those group morphisms from $G_1$ to $H_1$ sending each $G_i$ into $H_i$.
\end{defi}

Recall that this category has the following features \cite{Darné}:
\begin{itemize}
\item  There are forgetful functors $\omega_i: G_* \mapsto G_i$ from $\mathcal{SCF}$ to the category of groups. Since the lower central series $\Gamma_*(G)$ is the minimal strongly central series on a group $G$, the functor $\Gamma_*$ is left adjoint to $\omega_1$.

\item There is a functor from the category $\mathcal{SCF}$ to the category $\mathcal Lie_{\Z}$ of Lie rings (\emph{i.e.\ }Lie algebras over $\Z$), given by:
$G_* \mapsto \Lie(G_*):= \bigoplus{G_i/G_{i+1}},$ 
where Lie brackets are induced by group commutators.

\item  The category $\mathcal{SCF}$ is complete and cocomplete. To compute limits (resp. colimit), just endow the corresponding colimit of groups with the maximal (resp. minimal) compatible filtration.
\item  It is homological \cite[def. 4.1.1]{BB}. This means essentially that the usual lemmas of homological algebra (the nine lemma, the five lemma, the snake lemma, etc.) are true there. 
\item  It is action-representative (see Paragraph \ref{actions_in_SCF} below). 
\end{itemize}

In a homological category, we need to distinguish between usual epimorphisms (resp. monomorphisms) and \emph{regular} ones, that is, the ones obtained as coequalizers (resp. equalizers). In $\mathcal{SCF}$, the former are the $u$ such that $u_1 = \omega_1(u)$ is an epimorphism (resp. a monomorphism), whereas the latter are \emph{surjections} (resp. \emph{injections}):
\begin{defi}\label{def_inj-surj}
Let $u: G_* \longrightarrow H_*$ be a morphism in $\mathcal{SCF}$. It is called an \emph{injection} (resp.\ a \emph{surjection}) when $u_1$ is injective (resp.\ surjective) and $u^{-1}(H_i) = G_i$ (resp.\ $u(G_i) = H_i$) for all $i$.
\end{defi}

We can use this to give an explicit interpretation of the general notion of \emph{short exact sequences} \cite[Def 4.1.5]{BB} in $\mathcal{SCF}$: 
$\begin{tikzcd}[column sep=small]
                  1 \ar[r] &G_* \ar[r, "u"] & H_* \ar[r, "v"] & K_* \ar[r] &1
                  \end{tikzcd}$ 
is a short exact sequence if and only if:
\begin{equation}\label{SEC_in_SCF}
\left\{\begin{array}{l}
               u \text{ is an injection}, \\
               v \text{ is a surjection},\\
               u(G_1) = \ker(v).
         \end{array}
\right.
\end{equation}

\section{Semi-direct products and actions}

An action of group on another one by automorphisms, or of a Lie algebra on another one by derivations, are particular cases of a general notion of actions in a homological (or even only protomodular) category. We recall here the definitions and elementary properties of such actions, and what actions are in $\mathcal{SCF}$.

\subsection{Actions: an abstract definition}\label{par_def_actions}

In this paragraph, we use the language of \cite{BB}. A concise and accessible reference on the subject is \cite{Hartl}. The reader not familiar with this language can replace the category $\mathcal C$ by his favorite example (groups or Lie algebras, for instance).

\begin{defi}\label{def_action}
Let $\mathcal C$ be a protomodular category. If $X$ and $Z$ are two objects of $\mathcal C$, we define an \emph{action} of $Z$ on $X$ as a split extension (with a given splitting):
\[\begin{tikzcd} X \ar[hook, r] & Y \ar[two heads, r] & Z. \ar[l, bend right]
\end{tikzcd}\]
When such an action is given, we will say that $Z$ \emph{acts on} $X$, and write: $Z \circlearrowright X$.
Actions in $\mathcal C$ form a category $\mathcal Act(\mathcal C)$, morphisms between two actions being the obvious ones.
\end{defi}

\begin{ex}
The category $\mathcal Act(\mathcal Grp)$ is the usual category of group actions on one another by automorphisms: a morphism from $G \circlearrowright H$ to $G' \circlearrowright H'$ is given by a morphism $u: G \rightarrow G'$ and a morphism $v: H \rightarrow H'$, $v$ being $u$-equivariant, that is:
$\forall g \in G,\ \forall h \in H,\ v(g \cdot h) = u(g) \cdot v(h).$
\end{ex}

\begin{defi}\label{adjonction_ad-psd}
We denote by $\ltimes$ the functor from $\mathcal Act(\mathcal C)$ to $\mathcal C$ sending an action 
\[\begin{tikzcd} X \ar[r, hook] &Y \ar[r, two heads] &Z \ar[l, bend right]\end{tikzcd}\] 
on $Y$ (which is called $Z \ltimes X$), and by $\ad$ the functor from $\mathcal C$ to $\mathcal Act(\mathcal C)$ sending an object $C$ on \emph{the adjoint action}:
\[\begin{tikzcd}[column sep = huge, ampersand replacement=\&] C \ar[r, hook, "\iota_1 = \scalebox{0.7}{$\begin{pmatrix} 1 \\ 0 \end{pmatrix}$}"] \& C^2 \ar[r, two heads, swap, "\pi_2 = \scalebox{0.7}{$\begin{pmatrix} 0 & 1 \end{pmatrix}$}"] \& C. \ar[l, swap, bend right, "\delta = \scalebox{0.7}{$\begin{pmatrix} 1 \\ 1 \end{pmatrix}$}"]
\end{tikzcd}\]
\end{defi}

\begin{prop}
The above functors are adjoint to each other:
\[\ltimes:
\begin{tikzcd}
\mathcal Act(\mathcal C) \ar[r, shift left] &\mathcal C \ar[l, shift left]
\end{tikzcd}
: \ad.\]
\end{prop}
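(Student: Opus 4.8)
The plan is to prove the adjunction $\ltimes \dashv \ad$ by producing a bijection
$$\Hom_{\mathcal C}(Z \ltimes X,\, C)\ \cong\ \Hom_{\mathcal Act(\mathcal C)}\big((Z \circlearrowright X),\, \ad(C)\big),$$
natural in both the action $(Z \circlearrowright X)$ and the object $C$. Throughout, I present the given action as a split extension $X \overset{\iota}{\hookrightarrow} Y \overset{p}{\twoheadrightarrow} Z$ with section $s\colon Z \to Y$, so that $Y = Z \ltimes X$, $p\iota = 0$ and $ps = \mathrm{id}_Z$; I also write $\pi_1\colon C^2 \to C$ for the projection complementary to $\pi_2$. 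The one piece of structure I rely on is protomodularity of $\mathcal C$: it ensures that the pair $(\iota, s)$ is jointly strongly epimorphic, so that two morphisms out of $Y$ are equal once they agree after precomposition with $\iota$ and with $s$. In particular a morphism of actions is determined by its two outer components, its middle map being forced.

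First I would define the forward map. Given $f\colon Y \to C$, set $\Phi(f) := (u, v)$ with $u := fs$ and $v := f\iota$; the accompanying middle map $Y \to C^2$ is the one with components $f$ and $up$, furnished by the universal property of the product $C^2$. Using the relations $\pi_1\iota_1 = \mathrm{id}$, $\pi_2\iota_1 = 0$, $\pi_1\delta = \pi_2\delta = \mathrm{id}$, $p\iota = 0$ and $ps = \mathrm{id}$, one checks that this triple commutes with kernel inclusions, projections and sections, hence is a bona fide morphism of actions into $\ad(C)$. In the other direction, to a morphism of actions $\varphi$ with middle map $w\colon Y \to C^2$ I would assign $\Psi(\varphi) := \pi_1 \circ w$, which is a morphism of $\mathcal C$ because it is a composite of such.

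It then remains to check that $\Phi$ and $\Psi$ are mutually inverse and natural, which is a short diagram chase. On one side $\Psi\Phi(f) = \pi_1 \circ w = f$, since $w$ has first component $f$. On the other side, writing $f = \pi_1 w$, one recovers $fs = \pi_1 w s = \pi_1\delta u = u$ and $f\iota = \pi_1 w \iota = \pi_1\iota_1 v = v$, while the middle map is pinned down by $\pi_2 w = up$ (commutativity of the right-hand square); naturality in $C$ and in $(Z \circlearrowright X)$ follows from the same identities. Equivalently, one may package this as an explicit unit $\eta_{(Z\circlearrowright X)} = (s, \iota)\colon (Z\circlearrowright X) \to \ad(Y)$ and counit $\epsilon_C = \pi_1\colon C^2 \to C$, and verify the two triangle identities, both of which collapse to $\pi_1\delta = \pi_1\iota_1 = \mathrm{id}$ together with the joint strong epimorphy of $(\iota, s)$. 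I expect no computational difficulty; the only real subtlety — and thus the step to treat carefully — is the abstract bookkeeping of $\mathcal Act(\mathcal C)$, namely the fact that its morphisms are determined by their outer components. This is precisely where protomodularity is used, and it is what makes the two constructions inverse to one another.
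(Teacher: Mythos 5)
Your proof is correct and takes essentially the same route as the paper: the paper's proof uses exactly the same bijection, sending $\varphi\colon Z\ltimes X \to C$ to the morphism of actions with components $(\varphi i,\ (\varphi, \varphi s p),\ \varphi s)$ and, conversely, composing the middle map of a morphism of actions with $\pi_1\colon C^2 \to C$; you simply spell out the verifications the paper dismisses as ``easily checked.'' The only cosmetic difference is that you invoke protomodularity (joint strong epimorphy of $(\iota, s)$) to pin down middle maps, whereas for morphisms into $\ad(C)$ the universal property of the product $C^2$ already forces them; both justifications are valid.
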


\begin{proof}
Let $C$, $X$ and $Z$ be objects of $\mathcal C$. Let an action of $Z$ on $X$ be given. A morphism $\varphi: Z \ltimes X \rightarrow C$ induces a morphism between actions:
\[\begin{tikzcd} 
X \ar[r, hook, "i"] \ar[d, swap, "\varphi i"] &Z \ltimes X \ar[r, two heads, swap, "p"] \ar[d, swap, "{(\varphi, \varphi s p)}"]  &Z \ar[l, bend right, swap, "s"] \ar[d, "\varphi s"] \\
C \ar[r, hook, "\iota_1"] & C^2 \ar[r, two heads, swap, "\pi_2"] &C. \ar[l, bend right,  swap, "\delta"]
\end{tikzcd}\]
Conversely, a morphism between these actions gives in particular a morphism $\varphi: Z \ltimes X \rightarrow C^2 \overset{\pi_1}{\rightarrow} C$. One can easily check that these constructions are inverse to each other.
\end{proof}

\begin{ex}
When $\mathcal C = Grp$, the functor $\ad$ sends $G$ to the conjugation action $G \circlearrowright G$. The semi-direct product can be \emph{defined} as this functor's left adjoint.
\end{ex}

Any functor $F: \mathcal C \rightarrow \mathcal D$ does preserve split epimorphisms. As a consequence:
\begin{fait}\label{foncteur_et_psd}
Let $F: \mathcal C \rightarrow \mathcal D$ be a functor between protomodular categories. Then, for any action $Z \circlearrowright X$ in $\mathcal C$,
$F(Z \ltimes X) = F(Z) \ltimes \ker(F(Z \ltimes X \twoheadrightarrow Z)).$
\end{fait}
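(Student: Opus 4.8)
The plan is to deduce the statement almost immediately from the remark preceding it, namely that any functor preserves split epimorphisms together with a chosen section. The only structural input needed is that, in a protomodular category, a split epimorphism equipped with a section — together with its kernel — is exactly the datum of an action of the codomain on that kernel, which is precisely the content of Definition \ref{def_action}. So the whole argument amounts to transporting the split epimorphism underlying the given action through $F$ and reinterpreting its image.

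Concretely, I would start by writing the action $Z \circlearrowright X$ as the split extension
\[\begin{tikzcd} X \ar[r, hook, "i"] & Z \ltimes X \ar[r, two heads, "p"] & Z \ar[l, bend right, "s"] \end{tikzcd}\]
with $p \circ s = \mathrm{id}_Z$, so that $\ltimes$ applied to this action returns the middle object $Z \ltimes X$. Applying $F$ and using $F(p)\circ F(s) = F(p\circ s) = F(\mathrm{id}_Z) = \mathrm{id}_{F(Z)}$, the morphism $F(p)\colon F(Z \ltimes X) \to F(Z)$ is again a split epimorphism, with section $F(s)$.

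Next I would work entirely in the target category $\mathcal D$. The kernel $K := \ker(F(p))$, with its canonical monomorphism $k\colon K \hookrightarrow F(Z \ltimes X)$, exists in the (finitely complete, pointed) protomodular category $\mathcal D$. Moreover $F(p)$, being split, is a regular epimorphism in any category — it is the coequalizer of $F(s)\circ F(p)$ and the identity of $F(Z\ltimes X)$ — so the diagram
\[\begin{tikzcd} K \ar[r, hook, "k"] & F(Z \ltimes X) \ar[r, two heads, "F(p)"] & F(Z) \ar[l, bend right, "F(s)"] \end{tikzcd}\]
is a split extension in $\mathcal D$ with the chosen splitting $F(s)$; by Definition \ref{def_action} this is exactly an action $F(Z) \circlearrowright K$. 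Applying the functor $\ltimes$ — whose value on an action is, by definition, the middle object of the split extension — yields $F(Z)\ltimes K = F(Z \ltimes X)$, which is the claimed equality since $K = \ker\!\big(F(Z \ltimes X \twoheadrightarrow Z)\big)$.

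The one point deserving care, and the only place where protomodularity of $\mathcal D$ is genuinely used, is the identification of ``split epimorphism with section, together with its kernel'' with an object of $\mathcal Act(\mathcal D)$; everything else is formal. I would also emphasize that the kernel must be taken to be $\ker(F(p))$ and \emph{not} $F(X)$: a general functor $F$ need not preserve kernels, so only the split epimorphism — not the whole short exact sequence — is transported, which is exactly why the statement is phrased in terms of $\ker(F(Z \ltimes X \twoheadrightarrow Z))$ rather than $F(X)$.
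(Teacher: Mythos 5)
Your proof is correct and takes essentially the same route as the paper, which states this Fact as an immediate consequence of the observation that any functor preserves split epimorphisms together with their sections — exactly the argument you spell out. Your closing caveat that the kernel must be $\ker(F(p))$ rather than $F(X)$ also matches the paper's own remark following the Fact, so nothing is missing.
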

This allows us to define an induced functor $F_\#$ between the categories of actions:
\begin{equation}\label{def_Act(F)}
F_\#\left(
\begin{tikzcd} 
X \ar[r, hook, "i"] &Y \ar[r, two heads, swap, "p"]  &Z \ar[l, bend right, swap, "s"] 
\end{tikzcd}
\right):= 
\begin{tikzcd} 
\ker(Fp) \ar[r, hook] &F(Y) \ar[r, two heads, swap, "Fp"]  &F(Z). \ar[l, bend right, swap, "Fs"] 
\end{tikzcd}
\end{equation}
Remark that the description of $F_\#$ is particularly simple when $F$ preserves kernels of split epimorphisms: then $\ker(Fp) = F(X)$. 

\begin{rmq}
This construction $F \mapsto F_\#$ makes the construction of the category of actions into a functor from protomodular categories to categories. As one sees easily, it is a $2$-functor and, as such, it preserves adjunctions. Moreover, if 
$F: \begin{tikzcd}
\mathcal C \ar[r, shift left] &\mathcal D \ar[l, shift left] 
\end{tikzcd}: G$ 
is an adjunction, then we can write the following diagram:
\begin{equation}\label{diag_adj_actions}
\begin{tikzcd}
\mathcal Act(\mathcal C) \ar[r, shift left, "\ltimes"] \ar[d, shift right, swap, "F_\#"] 
&\mathcal C \ar[l, shift left, "\ad"] \ar[d, shift right, swap, "F"]  \\
\mathcal Act(\mathcal D) \ar[r, shift left, "\ltimes"] \ar[u, shift right, swap, "G_\#"] 
&\mathcal D. \ar[l, shift left, "\ad"] \ar[u, shift right, swap, "G"]
\end{tikzcd}
\end{equation}
Since $G$ commutes to limits, the square of right adjoints is commutative: $G_\# \circ \ad = \ad \circ G$. Obviously, the square of left adjoints also commute (which is an equivalent statement).
\end{rmq}

\subsection{Representability of actions}

The set $\Act(Z,X)$ of actions of $Z$ on $X$ is a contravariant functor in $Z$: the restriction of an action along a morphism is defined \emph{via} a pullback. In $\mathcal Grp$, as in $\mathcal Lie$, this functor is representable, for any $X$. Indeed, an action of a group $K$ on a group $G$ is given by a morphism $K \longrightarrow \Aut(G)$. Similarly, an action of a Lie algebra $\mathfrak k$ on a Lie algebra $\mathfrak g$ is given by a morphism $\mathfrak k \longrightarrow \Der(\mathfrak g)$.
The situation when actions are representable has notably been studied in \cite{BJK}. The following terminology was introduced in \cite[Def. 1.1]{Borceux-Bourn}:
\begin{defi}
A protomodular category $\mathcal C$ is said to be \emph{action-representative} when the functor $\Act(-,X)$ is representable, for any object $X \in \mathcal C$.
\end{defi}
A representative for $\Act(-,X)$ is a universal action on $X$. Explicitly, it is an action of an object $\mathcal A(X)$ on $X$ such that any action $Z \circlearrowright X$ is obtained by restriction along a unique morphism $Z \rightarrow \mathcal A(X)$.

\subsection{Actions in \texorpdfstring{$\mathcal{SCF}$}{SFC}}
\label{actions_in_SCF}

Using the explicit description of exact sequences in $\mathcal{SCF}$ given at the end of the first section \eqref{SEC_in_SCF}, we can describe explicitly what an action in $\mathcal{SCF}$ is:
\begin{prop}\textup{\cite[Prop. 1.20]{Darné}}.\label{actions_dans_SCF}
An action $K_* \circlearrowright G_*$ in $\mathcal{SCF}$ is the data of a group action of $K = K_1$ on $G = G_1$ satisfying:
\[\forall i, j \geq 1,\ [K_i, G_j] \subseteq G_{i+j}.\]
\end{prop}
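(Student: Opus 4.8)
The plan is to separate the data of a split extension in $\mathcal{SCF}$ into its degree-one part — which is nothing but a group action — and the strong-centrality condition on the total object, and to recognise the latter as the single mixed inequality $[K_i,G_j]\subseteq G_{i+j}$. Throughout I fix an action $G_*\hookrightarrow Y_*\twoheadrightarrow K_*$ with splitting $s$, and I write $u,v$ for the injection and the surjection. Applying the forgetful functor $\omega_1$ (which preserves split epimorphisms, so that $\omega_1(K_*\ltimes G_*)=K\ltimes G$ by Fact~\ref{foncteur_et_psd}) turns this into a split extension of groups $G\hookrightarrow Y\twoheadrightarrow K$, i.e.\ exactly a group action $K\circlearrowright G$ with $Y=K\ltimes G$. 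The key bookkeeping remark is that, once $G$ and $s(K)$ are identified with their images in $Y$, the action-commutators are literally group commutators inside $Y$: for $k\in K$ and $g\in G$ one has $[s(k),g]={}^{s(k)}g\cdot g^{-1}=(k\cdot g)g^{-1}\in G$, so the subgroup $[K_i,G_j]$ defined by the action coincides with $[s(K_i),G_j]$ computed in $Y$.

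The forward direction is then immediate. Since $s(K_i)\subseteq Y_i$ (the splitting is a morphism of $\mathcal{SCF}$) and $G_j=G\cap Y_j\subseteq Y_j$ (because $u$ is an injection), strong centrality of $Y_*$ gives $[s(K_i),G_j]\subseteq[Y_i,Y_j]\subseteq Y_{i+j}$; as this subgroup lies in the normal subgroup $G$, it lies in $G\cap Y_{i+j}=G_{i+j}$, which is precisely $[K_i,G_j]\subseteq G_{i+j}$.

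For the converse I would reconstruct $Y_*$ from the degree-one data. A short computation shows the filtration is forced: for $y\in Y_i$ the element $y\,s(v(y))^{-1}$ maps to $1$ under $v$ and lies in $Y_i$, hence lies in $G\cap Y_i=G_i$, so $Y_i=s(K_i)G_i$. I therefore take $Y_i:=s(K_i)G_i$ as definition (it is a genuine subgroup, because the mixed condition forces $K_i$ to preserve $G_i$), and the task is to check it is strongly central. With $Y_{i+j}$ normal in $Y$, the inclusion $[Y_i,Y_j]\subseteq Y_{i+j}$ can be tested on generators by the usual commutator calculus, reducing to four atomic brackets: $[s(K_i),s(K_j)]=s([K_i,K_j])\subseteq s(K_{i+j})$ and $[G_i,G_j]\subseteq G_{i+j}$ are free from strong centrality of $K_*$ and $G_*$, while $[s(K_i),G_j]$ and $[G_i,s(K_j)]$ land in $G_{i+j}$ exactly by the mixed condition. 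It remains to verify $v(Y_i)=K_i$, $G\cap Y_i=G_i$ and $s(K_i)\subseteq Y_i$, so that one really recovers a split extension in $\mathcal{SCF}$; combined with the uniqueness just noted, this yields the claimed equivalence.

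The main obstacle is that the reduction to generators in the converse is not legitimate until one knows that each $Y_{i+j}$ is normal in $Y=Y_1$, and proving that normality is where the \emph{off-diagonal} instances of the mixed condition are genuinely used: stability under conjugation by $s(K)$ rests on $K_i\trianglelefteq K$ together with $[K_1,G_i]\subseteq G_{i+1}$ (which forces $K$ to preserve $G_i$), while stability under conjugation by $G$ rests on $G_i\trianglelefteq G$ together with $[K_i,G_1]\subseteq G_{i+1}$. Thus the full strength of the condition $[K_i,G_j]\subseteq G_{i+j}$, and not merely its diagonal cases, enters through this normality step.
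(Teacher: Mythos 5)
Your proof is correct, and it follows exactly the route the paper indicates: it unwinds the definition of an action as a split extension using the explicit description \eqref{SEC_in_SCF} of injections, surjections and kernels in $\mathcal{SCF}$ (the paper itself defers the details to \cite[Prop.\ 1.20]{Darné}). Your attention to the normality of $Y_{i+j}$ in the converse direction, where the off-diagonal cases of $[K_i,G_j]\subseteq G_{i+j}$ are genuinely needed, is precisely the non-trivial point of the verification.
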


Using this description, one can show that actions are representable in $\mathcal{SCF}$:

\begin{theo}\label{Kaloujnine} \textup{\cite[Th. 1.16, Prop. 1.22]{Darné}}.
Let $G_*$ be a strongly central series. Let $j \geq 1$ be an integer. Define $\mathcal A_j(G_*) \subseteq \Aut(G_*)$ to be :
\begin{equation}\label{def_A_*}
\mathcal A_j(G_*) = \Enstq{\sigma \in \Aut(G_*)}{\forall i \geq 1,\ [\sigma, G_i] \subseteq G_{i+j}},
\end{equation} 
where the commutator is computed in $G_1 \rtimes \Aut(G_*)$, that is: $[\sigma, g] = \sigma(g)g^{-1}$. That is, $\mathcal A_j(G_*)$ is the group of automorphisms of $G_*$ acting trivially on every quotient $G_i/G_{i+j}$.
Then $\mathcal A_*(G_*)$ is a strongly central series. Moreover, it acts canonically  on $G_*$, and this action is universal. In particular, the category $\mathcal{SCF}$ is action-representative.
\end{theo}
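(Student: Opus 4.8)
The plan is to perform every computation inside the group $\Gamma := G_1 \rtimes \Aut(G_*)$, where the defining commutators $[\sigma,g] = \sigma(g)g^{-1}$ live, and to reduce each of the four assertions to elementary commutator calculus. The first fact I would record is that every $G_i$ is \emph{normal} in $\Gamma$: it is stable under conjugation by $G_1$ since $[G_1,G_i] \subseteq G_{i+1} \subseteq G_i$, and it is stable under conjugation by any $\sigma \in \Aut(G_*)$ because such a $\sigma$, being an isomorphism in $\mathcal{SCF}$, satisfies $\sigma(G_i) = G_i$. This normality is what will let me absorb conjugating factors throughout.

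With this in hand, the subgroup structure of each $\mathcal A_j(G_*)$ is routine. For a product I would use $[\sigma\tau, g] = {}^{\sigma}[\tau,g]\cdot[\sigma,g]$: if $\sigma,\tau \in \mathcal A_j$ and $g \in G_i$ then $[\tau,g] \in G_{i+j}$, hence ${}^{\sigma}[\tau,g] = \sigma([\tau,g]) \in \sigma(G_{i+j}) = G_{i+j}$, and $[\sigma,g] \in G_{i+j}$, so the product lies in $G_{i+j}$. For inverses, applying the defining condition to $h := \sigma^{-1}(g) \in G_i$ shows $[\sigma^{-1},g] = \sigma^{-1}(g)g^{-1} \in G_{i+j}$. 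The inclusions $\mathcal A_{j+1}(G_*) \subseteq \mathcal A_j(G_*)$ are immediate from $G_{i+j+1} \subseteq G_{i+j}$.

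The heart of the argument --- and the step I expect to be the main obstacle --- is the strong centrality $[\mathcal A_i(G_*), \mathcal A_j(G_*)] \subseteq \mathcal A_{i+j}(G_*)$. Given $\sigma \in \mathcal A_i$, $\tau \in \mathcal A_j$ and $g \in G_k$, I must show $[[\sigma,\tau],g] \in G_{i+j+k}$. I would invoke the Hall--Witt identity in $\Gamma$ applied to the triple $\sigma,\tau,g$, which expresses a conjugate of $[[\sigma,\tau^{-1}],g]$ as a product of conjugates of $[[\tau,g^{-1}],\sigma]$ and $[[g,\sigma^{-1}],\tau]$. Since $\mathcal A_i$ and $\mathcal A_j$ are already known to be subgroups, replacing $\sigma,\tau$ by their inverses is harmless, so it is enough to bound the two latter terms: from $[\tau,g^{-1}] \in G_{j+k}$ and $\sigma \in \mathcal A_i$ one gets $[[\tau,g^{-1}],\sigma] \in G_{i+j+k}$, and symmetrically $[g,\sigma^{-1}] \in G_{i+k}$ together with $\tau \in \mathcal A_j$ gives $[[g,\sigma^{-1}],\tau] \in G_{i+j+k}$. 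Because $G_{i+j+k}$ is normal in $\Gamma$, the conjugating factors keep us inside it, and Hall--Witt then forces $[[\sigma,\tau],g] \in G_{i+j+k}$. The only real care required is bookkeeping: matching the inverse and sign conventions of the chosen form of Hall--Witt so that the two controllable terms land in the correct degree, and checking that each conjugation is by an element normalizing $G_{i+j+k}$.

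Finally, the action and its universality cost almost nothing more. The condition defining $\mathcal A_i(G_*)$, namely $[\mathcal A_i(G_*), G_j] \subseteq G_{i+j}$ for all $j$, is exactly the criterion of Proposition \ref{actions_dans_SCF} for $\mathcal A_*(G_*)$ to act on $G_*$, so the canonical action is immediate. For universality I would start from an arbitrary action $K_* \circlearrowright G_*$; by Proposition \ref{actions_dans_SCF} it amounts to a group action of $K = K_1$ on $G = G_1$ with $[K_i,G_j] \subseteq G_{i+j}$. Taking $i=1$ shows $K_1$ preserves each $G_j$, yielding a group morphism $K_1 \to \Aut(G_*)$; the conditions for $i \geq 1$ say precisely that this morphism carries $K_i$ into $\mathcal A_i(G_*)$, i.e.\ it is a morphism $K_* \to \mathcal A_*(G_*)$ in $\mathcal{SCF}$. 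It manifestly recovers the given action by restriction, and it is the unique such morphism since it is forced on $K_1$ by the action itself. This exhibits $\mathcal A_*(G_*)$, with its canonical action, as a representative of $\Act(-,G_*)$, establishing action-representativity of $\mathcal{SCF}$.
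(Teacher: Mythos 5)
Your proof is correct. Note, however, that this paper does not actually prove Theorem \ref{Kaloujnine}: it is recalled without proof from \cite[Th.\ 1.16, Prop.\ 1.22]{Darné}, so there is no in-paper argument to compare against. Your route is the standard one for this statement and matches the paper's own toolkit: working inside $G_1 \rtimes \Aut(G_*)$, using normality of each $G_i$ there, deducing strong centrality from the Hall--Witt identity (equivalently, the three-subgroups lemma, which is precisely the tool this paper invokes for the analogous statement in Proposition-Definition \ref{SCD_relative}), and reading off the canonical action and its universality from the explicit description of actions given in Proposition \ref{actions_dans_SCF}. The only points worth tightening are purely cosmetic: when passing from $[\sigma,u]\in G_{i+j+k}$ to $[u,\sigma]\in G_{i+j+k}$ one uses $[u,\sigma]=[\sigma,u]^{-1}$, and in the universality step one should note that each $k\in K_1$ preserves the filtration together with its inverse (since $K_1$ is a group), which is what makes it an automorphism of $G_*$ rather than a mere endomorphism.
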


If a group $K$ acts on a group $G$, and $G_*$ is a strongly central filtration on $G = G_1$, we can pull back the canonical filtration $\mathcal A_*(G_*)$ by the associated morphism:
\[K \longrightarrow \Aut(G).\]
This gives a strongly central filtration $\mathcal A_*(K,G_*)$, maximal amongst strongly central filtrations on subgroups of $K$ which act on $G_*$ \emph{via} the given action $K \circlearrowright G$. It can be described explicitly as:
\begin{equation}\label{A_on_K}
\mathcal A_j(K,G_*) = \Enstq{k \in K}{\forall i \geq 1,\ [k, G_i] \subseteq G_{i+j}}\ \subseteq K.
\end{equation}

\subsection{The Andreadakis problem}

Let $G$ be a group. We denote $\Lie(\Gamma_*(G))$ by $\Lie(G)$, and we call it \emph{the Lie ring of $G$}. As products of commutators become sums of brackets inside the Lie algebra, the following fundamental property follows from the definition of the lower central series:
\begin{prop}\label{engdeg1} \textup{\cite[Prop. 1.19]{Darné}}.
The Lie ring $\Lie(G)$ is \emph{generated in degree $1$}. Precisely, it is generated (as a Lie ring) by $\Lie_1(G) = G^{ab}$.
\end{prop}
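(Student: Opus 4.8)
The plan is to induct on the degree $k$, showing that each homogeneous component $\Lie_k(G) = \Gamma_k(G)/\Gamma_{k+1}(G)$ is contained in the Lie subring generated by $\Lie_1(G) = G^{ab}$. The base case $k = 1$ is immediate. The engine driving the induction is the defining relation of the lower central series, $\Gamma_{k+1}(G) = [\Gamma_1(G), \Gamma_k(G)]$, which expresses every element of $\Gamma_{k+1}(G)$ as a product of commutators $[x_a, y_a]^{\pm 1}$ with $x_a \in \Gamma_1(G)$ and $y_a \in \Gamma_k(G)$.

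First I would isolate the one structural fact needed to translate group-theoretic statements into Lie-theoretic ones: the projection $\Gamma_{k+1}(G) \twoheadrightarrow \Lie_{k+1}(G)$ is a homomorphism onto an abelian group, so it turns a product into a sum of the images (with signs). Moreover, by the very definition of the bracket on $\Lie(G)$, the image in $\Lie_{k+1}(G)$ of a commutator $[x, y]$ with $x \in \Gamma_1(G)$ and $y \in \Gamma_k(G)$ is exactly the Lie bracket $[\bar x, \bar y]$ of the classes $\bar x \in \Lie_1(G)$ and $\bar y \in \Lie_k(G)$. Combining these two observations with the defining relation above shows that the image $\bar g \in \Lie_{k+1}(G)$ of any $g \in \Gamma_{k+1}(G)$ can be written as a $\Z$-linear combination $\bar g = \sum_a \pm [\bar x_a, \bar y_a]$ with $\bar x_a \in \Lie_1(G)$ and $\bar y_a \in \Lie_k(G)$.

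The inductive step then closes immediately: by the induction hypothesis each $\bar y_a$ lies in the subring generated by $\Lie_1(G)$, hence so does each bracket $[\bar x_a, \bar y_a]$, and therefore so does their sum $\bar g$. Since every class in $\Lie_{k+1}(G)$ arises in this way, $\Lie_{k+1}(G)$ is contained in the subring generated by $\Lie_1(G)$, completing the induction and hence the proof.

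I expect the only point requiring genuine care, rather than being a purely formal manipulation, to be the passage from products to sums, i.e. the assertion that modulo $\Gamma_{k+2}(G)$ the commutator map behaves additively in each variable. This is the content of the standard congruences $[xy, z] \equiv [x,z][y,z]$ and $[x, yz] \equiv [x,y][x,z]$ modulo one degree higher, which are precisely what make the bracket on $\Lie(G)$ well defined; once these identities are in hand, everything else is bookkeeping.
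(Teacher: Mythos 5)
Your proof is correct and matches the paper's own justification: the paper disposes of this proposition in one line (``as products of commutators become sums of brackets inside the Lie algebra, [it] follows from the definition of the lower central series'', citing \cite[Prop.~1.19]{Darné}), and your induction on degree, using $\Gamma_{k+1}(G)=[\Gamma_1(G),\Gamma_k(G)]$ together with the fact that the quotient map $\Gamma_{k+1}(G)\twoheadrightarrow\Lie_{k+1}(G)$ turns products of commutators into sums of brackets, is exactly the intended argument spelled out in full. No issues.
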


Consider $\mathcal A_*(G) := \mathcal A_*(\Gamma_*G)$. The group $\mathcal A_1(G)$ is the group of automorphisms of $G$ acting trivially on $\Lie(G)$. But this Lie algebra is generated in degree $1$. As a consequence, $\mathcal A_1(G)$ is the subgroup of automorphisms acting trivially on the abelianization $G^{ab} = \Lie_1(G)$, denoted by $IA_G$. The filtration $\mathcal A_*(G)$, being strongly central on $\mathcal A_1(G) = IA_G$, contains $\Gamma_*(IA_G)$. We are thus led to the problem of comparing these filtrations ; this is the \emph{Andreadakis problem} (Problem \ref{pb_Andreadakis}), which is a crucial question when trying to understand the structure of automorphism groups of residually nilpotent groups, in particular when trying to understand the structure of $\Aut(F_n)$.

\begin{reppb}{pb_Andreadakis}[Andreadakis]
How close is the inclusion $\Gamma_*(IA_G) \subseteq \mathcal A_*(G)$ to be an equality  ? 
\end{reppb}

\section{Exactness of the Lie functor}\label{section_exactness}

In this section, we investigate the Lie algebra of a semi-direct product of groups, and we recall the construction of the Johnson morphism associated with an action in $\mathcal{SCF}$. Both rely on the following fundamental proposition:
\begin{prop}\label{exactness_of_L} \textup{\cite[Prop. 1.24]{Darné}}.
The Lie functor $\Lie: \mathcal{SCF} \longrightarrow \mathcal Lie_{\Z}$ is \emph{exact}, \emph{i.e.\ }it preserves short exact sequences.
\end{prop}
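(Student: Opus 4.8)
The plan is to unwind the statement to a completely concrete verification carried out degree by degree. Start from a short exact sequence
\[1 \longrightarrow G_* \overset{u}{\longrightarrow} H_* \overset{v}{\longrightarrow} K_* \longrightarrow 1\]
in $\mathcal{SCF}$, so that by the description \eqref{SEC_in_SCF} the morphism $u$ is an injection, $v$ is a surjection, and $u(G_1) = \ker(v)$. Since $u_1$ is injective, I identify $G_1$ with its image; the injection condition then reads $G_i = G_1 \cap H_i$, and $G_1 = \ker(v)$. Applying $\Lie$ produces a sequence of graded Lie rings
\[0 \longrightarrow \Lie(G_*) \longrightarrow \Lie(H_*) \longrightarrow \Lie(K_*) \longrightarrow 0,\]
and the goal is exactly to show this is short exact in $\mathcal Lie_\Z$.

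The first reduction is formal: a sequence of Lie rings is exact if and only if the underlying sequence of abelian groups is, the brackets being automatically respected by functoriality of $\Lie$. Moreover $\Lie(H_*) = \bigoplus_i H_i/H_{i+1}$ is graded and the induced maps are homogeneous of degree $0$, so it suffices to check, for each fixed $i \geq 1$, that the sequence of abelian groups
\[0 \longrightarrow G_i/G_{i+1} \longrightarrow H_i/H_{i+1} \longrightarrow K_i/K_{i+1} \longrightarrow 0\]
is short exact. This splits into three verifications, each of which draws on one piece of the data \eqref{SEC_in_SCF} and Definition \ref{def_inj-surj}.

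Injectivity on the left follows from the injection property $u^{-1}(H_{i+1}) = G_{i+1}$: if $g \in G_i$ has $u(g) \in H_{i+1}$, then $g \in G_{i+1}$, so its class is trivial. Surjectivity on the right is immediate from the surjection property $v(H_i) = K_i$. For exactness in the middle, the inclusion $\ima \subseteq \ker$ is clear since $v \circ u = 1$. For the converse, take $h \in H_i$ with $v(h) \in K_{i+1}$; using $v(H_{i+1}) = K_{i+1}$, choose $h' \in H_{i+1}$ with $v(h') = v(h)$, so that $h(h')^{-1} \in \ker(v) = u(G_1)$, say $h(h')^{-1} = u(g)$. As $h' \in H_{i+1} \subseteq H_i$, we have $u(g) = h(h')^{-1} \in H_i$, whence $g \in u^{-1}(H_i) = G_i$ by the injection property; and since $H_{i+1}$ is normal in $H_i$, we get $h H_{i+1} = u(g) H_{i+1}$, which lies in the image.

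The main obstacle is precisely this middle exactness: it is the single place where all three defining conditions of a short exact sequence in $\mathcal{SCF}$ intervene at once — the injection property of $u$ at level $i$, the surjection property of $v$ at level $i+1$, and the identity $u(G_1) = \ker(v)$ — and where one genuinely has to lift a representative back through the filtration. The injectivity and surjectivity steps, by contrast, are direct readings of the definitions, and the passage from graded abelian groups to graded Lie rings is bookkeeping.
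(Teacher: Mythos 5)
Your proof is correct: the degree-by-degree verification, using the injection property of $u$, the surjection property of $v$ at level $i+1$, and $u(G_1)=\ker(v)$ exactly where you use them, is precisely the standard argument, and it is essentially the proof given in the cited reference \cite[Prop.\ 1.24]{Darné} (the present paper only recalls the statement without reproving it). Nothing is missing; in particular you correctly isolated middle exactness as the only step requiring all three conditions of \eqref{SEC_in_SCF} simultaneously.
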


\begin{cor}
The functor $\Lie$ preserves actions. In other words, if $K_* \circlearrowright H_*$ is an action in $\mathcal{SCF}$, then $\Lie(H_* \rtimes K_*) = \Lie(H_*) \rtimes \Lie(K_*).$
\end{cor}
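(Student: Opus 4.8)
The plan is to derive this corollary from the exactness of the Lie functor (Proposition \ref{exactness_of_L}) by feeding it the short exact sequence in $\mathcal{SCF}$ associated with the given action, and then recognizing the output as a semi-direct product of Lie rings. First I would recall that an action $K_* \circlearrowright H_*$ in $\mathcal{SCF}$ is, by definition (Definition \ref{def_action} specialized to $\mathcal{SCF}$), precisely a split short exact sequence
\[\begin{tikzcd} 1 \ar[r] & H_* \ar[r, hook] & H_* \rtimes K_* \ar[r, two heads] & K_* \ar[r] & 1, \ar[l, bend left] \end{tikzcd}\]
the splitting being the canonical inclusion of $K_*$ into the semi-direct product. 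This is a short exact sequence in the sense of \eqref{SEC_in_SCF}, and it is split by a morphism of $\mathcal{SCF}$.

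Next I would apply $\Lie$ to this whole diagram. By Proposition \ref{exactness_of_L}, the functor $\Lie$ preserves short exact sequences, so it sends the above to a short exact sequence of Lie rings
\[\begin{tikzcd}[column sep=small] 0 \ar[r] & \Lie(H_*) \ar[r] & \Lie(H_* \rtimes K_*) \ar[r] & \Lie(K_*) \ar[r] & 0. \end{tikzcd}\]
Crucially, $\Lie$ is a functor, so it carries the section $K_* \to H_* \rtimes K_*$ to a Lie-ring section $\Lie(K_*) \to \Lie(H_* \rtimes K_*)$, and it carries the retraction $H_* \rtimes K_* \to K_*$ to a retraction of this section. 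Hence the short exact sequence of Lie rings is \emph{split}.

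The final step is to interpret a split short exact sequence of Lie rings as a semi-direct product. Abstractly, this is exactly the content of the adjunction $\ltimes \dashv \ad$ together with the fact that $\Lie$, as a (left adjoint) exact functor, commutes with the formation of semi-direct products in the sense of Fact \ref{foncteur_et_psd}: applying the functorial description \eqref{def_Act(F)} with $F = \Lie$ gives
\[\Lie_\#\bigl(K_* \circlearrowright H_*\bigr) = \bigl(\Lie(K_*) \circlearrowright \Lie(H_*)\bigr),\]
since $\Lie$ preserves kernels of split epimorphisms (being exact, $\ker(\Lie\, p) = \Lie(H_*)$). Passing through the functor $\ltimes$ on both sides then yields $\Lie(H_* \rtimes K_*) = \Lie(K_*) \ltimes \Lie(H_*) = \Lie(H_*) \rtimes \Lie(K_*)$, which is the claim. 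Concretely, the Lie bracket on the sum $\Lie(H_*) \oplus \Lie(K_*)$ restricts to the original brackets on each summand and the mixed bracket $[\,\overline{k}, \overline{h}\,]$ is the class of the group commutator $[k,h] \in H_*$, which is exactly the derivation action of $\Lie(K_*)$ on $\Lie(H_*)$ induced by the conjugation action.

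I do not expect a genuine obstacle here, as the statement is essentially a formal consequence of exactness together with functoriality of the splitting; the only point requiring a little care is checking that the Lie-theoretic semi-direct product $\rtimes$ computed on graded pieces genuinely agrees with the abstract $\ltimes$ of Definition \ref{adjonction_ad-psd}, i.e.\ that the induced action of $\Lie(K_*)$ on $\Lie(H_*)$ by derivations is the one coming from conjugation in the group. This is immediate from the description of the Lie bracket as induced by group commutators, but it is the one place where one must unwind the definitions rather than argue purely formally.
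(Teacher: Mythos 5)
Your proof is correct and follows exactly the route the paper intends: the corollary is stated without proof precisely because it is the formal consequence of Proposition \ref{exactness_of_L} together with Fact \ref{foncteur_et_psd} (functors preserve split epimorphisms, and exactness identifies $\ker(\Lie\, p)$ with $\Lie(H_*)$) that you spell out. One cosmetic remark: your parenthetical claim that $\Lie$ is a left adjoint is neither asserted in the paper nor needed, since exactness alone gives the preservation of kernels of split epimorphisms that your argument uses.
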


\subsection{Lower central series of a semi-direct product of groups}\label{par_LCS_of_sdp}

We can use the tools introduced so far to study the lower central series of a semi-direct product of groups, and its Lie algebra.
Precisely, let $G = H \rtimes K$ be a semi-direct product of groups. The functor $F = \Gamma$ preserves split epimorphisms, whence a decomposition into a semi-direct product of strongly central series:
\[\Gamma_* G = H_* \rtimes \Gamma_* K,\]
where $H_i$ is the kernel of the split projection:
\[\begin{tikzcd}
  \Gamma_i G   \ar[r, two heads] &
  \Gamma_i K.  \ar[l, bend right, dashed]
  \end{tikzcd}\]
The aim of the present paragraph is to give an explicit description of $H_*$ (that is, using the notations of \ref{par_def_actions}, to describe $\Gamma_\#(K \circlearrowright H)$) and to identify the conditions under which $H_*$ is equal to $\Gamma_*H$.

\medskip

Let us begin by introducing a general construction:

\begin{propdef}\label{SCD_relative}
Let $G$ be a group, and $H$ a normal subgroup. We define a strongly central filtration $\cpx\Gamma^G (H)$ on $H$ by:
\[\begin{cases} \Gamma_1^G (H):= H, \\ \Gamma_{k+1}^G:= [G, \Gamma_k^G (H)]. \end{cases}\]
\end{propdef}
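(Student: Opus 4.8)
The plan is to verify directly that the sequence $\Gamma^G_*(H)$ satisfies the axioms of a strongly central filtration. Write $H_k := \Gamma^G_k(H)$ for short. First I would check by induction on $k$ that each $H_k$ is a normal subgroup of $G$ contained in $H$: indeed $H_1 = H$ is normal in $G$ by hypothesis, and if $H_k$ is normal in $G$ and contained in $H$, then $H_{k+1} = [G, H_k]$ is again normal in $G$ (as the commutator subgroup of two normal subgroups) and satisfies $H_{k+1} \subseteq [G, H] \subseteq H$ since $H$ is normal. The nestedness $H_{k+1} \subseteq H_k$ follows by a parallel induction: $H_2 = [G, H] \subseteq H = H_1$, and $H_{k+1} = [G, H_k] \subseteq [G, H_{k-1}] = H_k$. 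This already shows that $\Gamma^G_*(H)$ is a filtration of $H$ by normal subgroups of $G$.

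The substantial point is the strong centrality condition $[H_i, H_j] \subseteq H_{i+j}$. Rather than prove this directly, I would establish the stronger statement
\[ [\Gamma_i(G), H_j] \subseteq H_{i+j} \qquad \text{for all } i, j \geq 1, \]
by induction on $i$. The base case $i = 1$ is exactly the defining relation $[G, H_j] = H_{j+1}$. For the inductive step, I would use $\Gamma_{i+1}(G) = [G, \Gamma_i(G)]$ together with the three subgroups lemma (Hall--Witt) applied to the subgroups $G$, $\Gamma_i(G)$ and $H_j$, with respect to the normal subgroup $N := H_{i+j+1}$. Concretely, one has $[[\Gamma_i(G), H_j], G] \subseteq [H_{i+j}, G] = H_{i+j+1}$ by the induction hypothesis, and $[[H_j, G], \Gamma_i(G)] = [\Gamma_i(G), H_{j+1}] \subseteq H_{i+j+1}$ by the induction hypothesis applied with $j$ replaced by $j+1$; the lemma then yields $[[G, \Gamma_i(G)], H_j] \subseteq H_{i+j+1}$, as required.

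Finally, a one-line induction gives $H_i \subseteq \Gamma_i(G)$ for all $i$ (since $H_1 = H \subseteq G$ and $H_{k+1} = [G, H_k] \subseteq [G, \Gamma_k(G)] = \Gamma_{k+1}(G)$), whence the desired strong centrality follows from the stronger statement: $[H_i, H_j] \subseteq [\Gamma_i(G), H_j] \subseteq H_{i+j}$. The only delicate point in the whole argument is the inductive step above, where care is needed to set up the three subgroups lemma with the correct normal subgroup and to feed the induction hypothesis into it in the two required forms; the remaining verifications are routine inductions relying only on the normality of $H$ in $G$.
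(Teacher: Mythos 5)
Your proof is correct and follows essentially the same route as the paper: nestedness by induction from the normality of $H$ in $G$, and strong centrality by induction via the three subgroups lemma. Your strengthened intermediate statement $[\Gamma_i(G), \Gamma_j^G(H)] \subseteq \Gamma_{i+j}^G(H)$ is exactly the fact the paper records just after the proposition (namely that $\Gamma_*(G)$ acts on $\Gamma_*^G(H)$), so nothing is lost or added in substance.
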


\begin{proof}
The inclusions $\Gamma_{k+1}^G \subseteq \Gamma_k^G$ are obtained by induction on $k$, the first one being the normality of $H$ in $G$.
The strong centrality statement is obtained by induction, using the 3-subgroups lemma.
\end{proof}

Since $[G,\Gamma_k^G(H)]\subseteq \Gamma_k^G(H)$, the $\Gamma_k^G(H)$ are normal in $G$. In fact, we have: 
\[[G,\Gamma_k^G(H)]\subseteq \Gamma_{k+1}^G(H) \text{, so }G = \mathcal A_1(G,\Gamma_*^G(H)).\] 
As a consequence, $\mathcal A_*(G,\Gamma_*^G(H))$ is a strongly central filtration on the whole of $G$, so it contains $\Gamma_*(G)$. Thus $\Gamma_*(G)$ acts on $\Gamma_*^G(H)$. Moreover, it is clear that $\Gamma_*^G(H)$ is the minimal strongly central filtration on $H$ such that the action $G$ on $H$ induces an action of $\Gamma_*(G)$.

Now, let $K \circlearrowright H$ be a group action. We can apply the above construction to $G = H \rtimes K \supseteq H$. We will write $\Gamma_*^K(H)$ for $\Gamma_*^{H \rtimes K}(H)$ (this will not cause any confusion: if $H$ is a normal subgroup of a group $G$, then $\Gamma_*^G(H) = \Gamma_*^{H \rtimes G}(H)$, for the semi-direct product associated to the conjugation action of $G$ on $H$). Using these constructions, we can identify the filtration $H_*$ defined above:
\begin{prop}\label{Gamma(PSD)_0}
If a group $K$ acts on a group $H$, then:
\[\Gamma_*(H \rtimes K) = \Gamma_*^K(H) \rtimes \Gamma_*H.\]
\end{prop}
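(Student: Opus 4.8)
The plan is to prove that the two filtrations $\Gamma_*(H\rtimes K)$ and $L_*:=\Gamma_*^K(H)\rtimes\Gamma_*K$ coincide, by showing each is contained in the other, where I write $G:=H\rtimes K$. Before comparing them I must first check that $L_*$ really is a strongly central filtration on $G$, i.e.\ an object of $\mathcal{SCF}$ with $L_1=G$. This is where the material preceding the statement does the work: by Proposition~\ref{actions_dans_SCF} it suffices to know that $\Gamma_*K\circlearrowright\Gamma_*^K(H)$ is an action in $\mathcal{SCF}$, that is, $[\Gamma_iK,\Gamma_j^K(H)]\subseteq\Gamma_{i+j}^K(H)$ for all $i,j\geq1$. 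Since the inclusion $K\hookrightarrow G$ is functorial for $\Gamma_*$ we have $\Gamma_iK\subseteq\Gamma_iG$, and it was established just above the statement that $\Gamma_*(G)$ acts on $\Gamma_*^K(H)=\Gamma_*^{G}(H)$, i.e.\ $[\Gamma_iG,\Gamma_j^G(H)]\subseteq\Gamma_{i+j}^G(H)$; combining the two yields the required containment. Then $L_*=\Gamma_*^K(H)\rtimes\Gamma_*K$ is the value of the functor $\ltimes$ on this action, hence an object of $\mathcal{SCF}$, and its underlying filtration at level $i$ is the internal semi-direct product $\Gamma_i^K(H)\rtimes\Gamma_iK$ inside $G$ (the two factors meet trivially because $H\cap K=1$, and $\Gamma_i^K(H)\trianglelefteq G$ was noted in the text).

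The inclusion $L_*\subseteq\Gamma_*G$ is the easy half. On one hand $\Gamma_iK\subseteq\Gamma_iG$ by functoriality. On the other hand $\Gamma_i^K(H)\subseteq\Gamma_iG$ by a one-line induction: $\Gamma_1^K(H)=H\subseteq G=\Gamma_1G$, and $\Gamma_{k+1}^K(H)=[G,\Gamma_k^K(H)]\subseteq[G,\Gamma_kG]=\Gamma_{k+1}G$. Since $\Gamma_iG$ is a subgroup containing both factors, it contains their product $L_i=\Gamma_i^K(H)\cdot\Gamma_iK$. For the reverse inclusion $\Gamma_*G\subseteq L_*$ I would invoke the universal property of the lower central series: $\Gamma_*$ is left adjoint to $\omega_1$, so $\Gamma_*G$ is the \emph{minimal} strongly central filtration whose first term is $G$. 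As $L_*$ is a strongly central filtration with $L_1=\Gamma_1^K(H)\rtimes\Gamma_1K=H\rtimes K=G$, minimality forces $\Gamma_iG\subseteq L_i$ for every $i$. Combining the two inclusions gives $\Gamma_*G=L_*=\Gamma_*^K(H)\rtimes\Gamma_*K$, the asserted decomposition (the semi-direct factor on the right being $\Gamma_*K$, matching the filtration $H_*\rtimes\Gamma_*K$ produced by Fact~\ref{foncteur_et_psd}). As a byproduct this identifies the kernel filtration $H_*=\Gamma_*G\cap H$ of that Fact with $\Gamma_*^K(H)$.

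I expect the only genuine obstacle to be the first step — verifying $L_*\in\mathcal{SCF}$ — and even this reduces entirely to the action condition $[\Gamma_iK,\Gamma_j^K(H)]\subseteq\Gamma_{i+j}^K(H)$, which is handed to us by the discussion preceding the statement. The two comparison inclusions are then formal: one follows from functoriality together with the minimal induction defining $\Gamma_*^K(H)$, the other from the adjunction characterizing the lower central series. Beyond keeping the internal semi-direct product structure (normality of $\Gamma_i^K(H)$ in $G$ and triviality of $\Gamma_i^K(H)\cap\Gamma_iK$) straight at each graded level, I anticipate no hidden difficulty.
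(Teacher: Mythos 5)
Your proof is correct and follows essentially the same route as the paper: the paper likewise deduces that $\Gamma_*^K(H)\rtimes\Gamma_*(K)$ is a strongly central filtration on $H\rtimes K$ from the action of $\Gamma_*(K)\subseteq\Gamma_*(H\rtimes K)$ on $\Gamma_*^K(H)$, obtains one inclusion by minimality of the lower central series, and dismisses the other as immediate from the definitions (which you merely spell out via the induction $\Gamma_{k+1}^K(H)=[G,\Gamma_k^K(H)]\subseteq\Gamma_{k+1}G$). Your version is just a more detailed write-up of the same argument.
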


\begin{proof}
Since $\Gamma_*(K) \subseteq \Gamma_*(H \rtimes K)$ acts on $\Gamma_*^K(H)$, the filtration $\Gamma_*^K(H) \rtimes \Gamma_*(K)$ is strongly central on $H \rtimes K$, so it contains $\Gamma_*(H \rtimes K)$. The other inclusion follows directly from the definitions.
\end{proof}

\begin{rmq}
We have thus identified $\Gamma_\# = \mathcal Act(\Gamma)$ (see Paragraph \ref{par_def_actions}):
\[\Gamma_\#: K \circlearrowright H \longmapsto \Gamma_*(K) \circlearrowright \Gamma_*^K(H).\]
In this context, the diagram \eqref{diag_adj_actions} reads:
\[\begin{tikzcd}
  \mathcal Act(\mathcal{SCF}) \ar[r, shift left, "\ltimes"] \ar[d, shift right, swap, "\Gamma_\#"]
  & \mathcal{SCF}  \ar[l, shift left, "c"] \ar[d, shift right, swap, "\Gamma"] \\
  \mathcal Act(\mathcal Grp) \ar[r, shift left, "\ltimes"] \ar[u, shift right, swap, " \omega_1"] 
  & \mathcal Grp.  \ar[l, shift left, "c"] \ar[u, shift right, swap, " \omega_1"] \\
\end{tikzcd} 
\]
\end{rmq}

We now describe the conditions under which $\Gamma_*^K(H) = \Gamma_*(H)$:

\begin{propdef}\label{Gamma(PSD)}
Let $H \rtimes K$ be a semi-direct product of groups. Then the following assertions are equivalent:
\begin{enumerate}
\item The action of $K$ on $H^{ab}$ is trivial,
\item $[K, H] \subseteq [H,H] = \Gamma_2(H),$ \label{item:1}
\item $K \circlearrowright H \text{ induces an action }\Gamma_*K \circlearrowright \Gamma_*H,$ \label{item:2}
\item $\Gamma_*^K(H) = \Gamma_*H,$ \label{item:3}
\item $\forall i,\ \Gamma_i(H \rtimes K) = \Gamma_i H \rtimes \Gamma_i K,$ \label{item:4}
\item $\Lie(H \rtimes K) \cong \Lie(H) \rtimes \Lie(K),$ \label{item:5}
\item $(H \rtimes K)^{ab} \cong H^{ab} \times K^{ab}.$ \label{item:6}
\end{enumerate}
When these conditions are satisfied, we will say that the semi-direct product $H \rtimes K$ is an \emph{almost-direct} one.
\end{propdef}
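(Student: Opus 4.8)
The plan is to prove the seven statements equivalent through a single cycle of implications, using the second condition, $[K,H]\subseteq\Gamma_2(H)$, as the central pivot since it is the most elementary to manipulate. Three equivalences are essentially formal. Conditions (1) and (2) say the same thing: the action of $K$ on $H^{ab}$ sends the class of $h$ to the class of $k\cdot h$, so it is trivial exactly when $(k\cdot h)h^{-1}=[k,h]$ lies in $[H,H]$ for all $k\in K$, $h\in H$, which is precisely $[K,H]\subseteq\Gamma_2(H)$. For (4) and (5), Proposition \ref{Gamma(PSD)_0} gives $\Gamma_i(H\rtimes K)=\Gamma_i^K(H)\rtimes\Gamma_i(K)$ for every $i$; substituting (4) yields (5), and intersecting (5) with $H$ recovers (4), so $(4)\Leftrightarrow(5)$. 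Finally, an easy induction shows that one always has $\Gamma_*H\subseteq\Gamma_*^K(H)$ (the inductive step being $\Gamma_{k+1}(H)=[H,\Gamma_k(H)]\subseteq[H\rtimes K,\Gamma_k^K(H)]=\Gamma_{k+1}^K(H)$), so that (4) amounts to the reverse inclusion $\Gamma_*^K(H)\subseteq\Gamma_*H$.

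The computational heart of the argument is the implication $(2)\Rightarrow(4)$. Here I would first show, by induction on $k$, that $[K,\Gamma_k(H)]\subseteq\Gamma_{k+1}(H)$ for all $k\geq1$. The base case $k=1$ is exactly (2). For the inductive step I would write $\Gamma_{k+1}(H)=[H,\Gamma_k(H)]$ and apply the three-subgroups lemma to $[K,[H,\Gamma_k(H)]]$, bounding it by $[H,[\Gamma_k(H),K]]\cdot[\Gamma_k(H),[K,H]]$. The first factor lies in $[H,\Gamma_{k+1}(H)]=\Gamma_{k+2}(H)$ by the induction hypothesis, and the second lies in $[\Gamma_k(H),\Gamma_2(H)]\subseteq\Gamma_{k+2}(H)$ by (2) together with the strong centrality of $\Gamma_*H$. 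Granting this, a second induction, with hypothesis $\Gamma_k^K(H)=\Gamma_k(H)$, gives $\Gamma_{k+1}^K(H)=[H\rtimes K,\Gamma_k(H)]=\Gamma_{k+1}(H)\cdot[K,\Gamma_k(H)]\subseteq\Gamma_{k+1}(H)$, which is the inclusion needed for (4).

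It remains to close the cycle and reincorporate conditions (3), (6) and (7). For (3), note that $(3)\Rightarrow(2)$ is immediate by taking $i=j=1$, while $(4)\Rightarrow(3)$ follows because $\Gamma_*(H\rtimes K)$ — hence its subfiltration $\Gamma_*K$ — acts on $\Gamma_*^K(H)$, which under (4) is $\Gamma_*H$. For the Lie and abelianized statements, $(5)\Rightarrow(6)$ follows from the corollary that $\Lie$ preserves actions (consequent to Proposition \ref{exactness_of_L}): under (5) the decomposition $\Gamma_*(H\rtimes K)=\Gamma_*H\rtimes\Gamma_*K$ is a semi-direct product in $\mathcal{SCF}$, so $\Lie$ carries it to $\Lie(H)\rtimes\Lie(K)$. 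Taking the degree-one part of the canonical isomorphism of (6) gives $(H\rtimes K)^{ab}\cong H^{ab}\times K^{ab}$, i.e. (7), since $\Lie_1(G)=G^{ab}$ and brackets raise degree, so no twisting survives in degree one. Lastly, for $(7)\Rightarrow(2)$, Proposition \ref{Gamma(PSD)_0} in degree two gives $\Gamma_2(H\rtimes K)=\bigl([H,H]\,[K,H]\bigr)\rtimes[K,K]$, whence $(H\rtimes K)^{ab}\cong\bigl(H^{ab}/\overline{[K,H]}\bigr)\times K^{ab}$; this is canonically $H^{ab}\times K^{ab}$ exactly when $\overline{[K,H]}=0$, that is when (2) holds. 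Chaining $(2)\Rightarrow(4)\Leftrightarrow(5)\Rightarrow(6)\Rightarrow(7)\Rightarrow(2)$, together with $(1)\Leftrightarrow(2)$ and $(4)\Rightarrow(3)\Rightarrow(2)$, establishes all equivalences. The main obstacle is the three-subgroups induction of the second paragraph; everything else is bookkeeping or a direct appeal to results already established.
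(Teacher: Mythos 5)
Your proof is correct, and it follows the same overall architecture as the paper's --- a cycle of implications pivoting on condition (2), with the equivalence $(4)\Leftrightarrow(5)$ extracted from Proposition \ref{Gamma(PSD)_0}, the implication $(5)\Rightarrow(6)$ from the exactness of $\Lie$, and the return to (2) via degree-one considerations --- but the computational core is handled quite differently. Where you prove $[K,\Gamma_k H]\subseteq \Gamma_{k+1}H$ by a direct induction with the three-subgroups lemma (your setup is sound: the target $\Gamma_{k+2}(H)$ is normal in $H\rtimes K$, being characteristic in the normal subgroup $H$, so the lemma applies), the paper gets the same statement in one line from Proposition \ref{engdeg1}: triviality of the $K$-action on $H^{ab}=\Lie_1(H)$ forces triviality on all of $\Lie(H)$, because that Lie ring is generated in degree $1$. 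The paper then passes from $K$ to $\Gamma_*K$ (condition (3)) by invoking the strong centrality of the universal filtration $\mathcal A_*(K,\Gamma_*H)$, and from (3) to (4) by the minimality of $\Gamma_*^K(H)$, whereas you go straight from (2) to (4) with a second induction and recover (3) afterwards from the fact that $\Gamma_*(H\rtimes K)$ acts on $\Gamma_*^K(H)$. In effect you re-prove by hand exactly the commutator mechanics that Proposition \ref{engdeg1} and the $\mathcal A_*$ formalism encapsulate; your version is more elementary and self-contained, while the paper's is shorter and shows how the general machinery absorbs the induction. One point where you are more careful than the paper: in closing the loop through (7) you note explicitly that the isomorphism $(H\rtimes K)^{ab}\cong H^{ab}\times K^{ab}$ must be the canonical one (your degree-two computation $(H\rtimes K)^{ab}\cong \bigl(H^{ab}/\overline{[K,H]}\bigr)\times K^{ab}$ makes this visible); the paper dismisses $(6)\Rightarrow(7)\Rightarrow(1)$ as straightforward, leaving that reading implicit --- and with a merely abstract isomorphism that last implication can actually fail for groups of infinite rank.
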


\begin{proof}
The second statement means that $K$ acts trivally on $H^{ab} = \Lie_1(H)$, hence on $\Lie(H)$, as this algebra is generated in degree one $1$ (proposition \ref{engdeg1}). But this means exactly that:
\[\forall i,\ [K, \Gamma_i H] \subseteq \Gamma_{i+1}H,\]
which is equivalent to $K$ being equal to $\mathcal A_1(K,\Gamma_*H)$. Since $\mathcal A_*(K,\Gamma_*H)$ is strongly central, this in turn is equivalent to $\Gamma_*K \subseteq \mathcal A_*(K,\Gamma_*H)$, which is exactly \eqref{item:2}.

The filtration $\Gamma_*H$ is the minimal strongly central series on $H$, and $\Gamma_*^K(H)$ is the minimal one on which $\Gamma_*K$ acts (through the given action of $K$ on $H$). Hence the equivalence with \eqref{item:3}. The assertion \eqref{item:4} is clearly equivalent to \eqref{item:3} and, using the exactness of $\Lie$ (Proposition \eqref{exactness_of_L}), we see that it implies \eqref{item:5}. The remaining implications  $\eqref{item:5} \Rightarrow  \eqref{item:6}  \Rightarrow \eqref{item:1}$ are straightforward.
\end{proof}

\subsection{Johnson morphisms}

In this paragraph also, we recall some material from \cite{Darné}.

\medskip

As a consequence of Proposition \ref{exactness_of_L}, the functor $\Lie$ preserves actions. Precisely, from an action in $\mathcal{SCF}$:
\[\begin{tikzcd} G_* \ar[r, hook] & H_* \ar[r, two heads] & K_*, \ar[l, bend right]
\end{tikzcd}\]
we get an action in the category of graded Lie rings:
\[\begin{tikzcd} \Lie(G_*) \ar[r, hook] & \Lie(H_*) \ar[r, two heads] & \Lie(K_*). \ar[l, bend right]
\end{tikzcd}\]
Such an action is given by a morphism of graded Lie rings:
\begin{equation}\label{Johnson}
\tau: \Lie(K_*) \longrightarrow \Der_*(\Lie(G_*)).
\end{equation}
The target is the (graded) Lie algebra of graded derivations: a derivation is of degree $k$ when it raises degrees of homogeneous elements by $k$.
\begin{defi}
The morphism \eqref{Johnson} is called the \emph{Johnson morphism} associated to the given action $K_* \circlearrowright G_*$.
\end{defi}
We can give an explicit description of this morphism: for $k \in K$, the derivation associated to $\bar{k}$ is induced by $[\bar{k},-]$ inside $\Lie(G_* \rtimes K_*) = \Lie(G_*) \rtimes \Lie(K_*)$, so it is induced by $[k,-]$ inside $G_* \rtimes K_*$.

\begin{ex}\label{déf_tau}
The Johnson morphism associated to the universal action $\mathcal A_*(G_*)\circlearrowright G_*$ is the Lie morphism $\tau: \Lie(\mathcal A_*(G_*)) \longrightarrow \Der_*(\Lie(G_*))$
induced by $\sigma \mapsto (x \mapsto \sigma(x)x^{-1})$.
\end{ex}

The \emph{Johnson morphism} turns out to be a powerful tool in the study of the Andreadakis filtration, thanks to the following injectivity statement:
\begin{lem}\textup{\cite[Lem.\ 1.28]{Darné}}\label{tau_inj}
Let $K_* \circlearrowright G_*$ be an action in $\mathcal{SCF}$. The associated Johnson morphism $\tau: \Lie(K_*) \longrightarrow \Der_*(\Lie(G_*))$ is injective if and only if $K_* = \mathcal A_*(K_1, G_*)$.
\end{lem}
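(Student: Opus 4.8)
The plan is to compute the kernel of $\tau$ degree by degree and then translate its vanishing into a comparison of the two filtrations. Recall from the explicit description (Example~\ref{déf_tau}) that for $k \in K_j$ the class $\bar k \in \Lie_j(K_*) = K_j/K_{j+1}$ is sent by $\tau$ to the degree-$j$ derivation $[\bar k, -]$ of $\Lie(G_*)$, which acts on a class $\bar g \in \Lie_i(G_*) = G_i/G_{i+1}$ by $\bar g \mapsto \overline{[k,g]} \in G_{i+j}/G_{i+j+1}$, the commutator being computed in $G_* \rtimes K_*$. Hence $\tau(\bar k) = 0$ means exactly that $[k,g] \in G_{i+j+1}$ for all $i$ and all $g \in G_i$, that is $[k, G_i] \subseteq G_{i+(j+1)}$ for every $i$; comparing with the explicit formula \eqref{A_on_K}, this is precisely the condition $k \in \mathcal A_{j+1}(K_1, G_*)$.

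First I would record the inclusion $K_* \subseteq \mathcal A_*(K_1, G_*)$: since $K_*$ is a strongly central filtration whose bottom group $K_1$ acts on $G_*$ through the given action, it is dominated by the maximal such filtration $\mathcal A_*(K_1, G_*)$; in particular $K_{j+1} \subseteq \mathcal A_{j+1}(K_1, G_*)$, and evaluating \eqref{A_on_K} at $j = 1$ together with the defining condition $[K_1, G_i] \subseteq G_{i+1}$ of an action in $\mathcal{SCF}$ gives $K_1 = \mathcal A_1(K_1, G_*)$. Combined with the previous paragraph, the inclusion $K_{j+1} \subseteq \mathcal A_{j+1}$ makes the quotient legitimate and identifies the degree-$j$ part of the kernel as
\[
\ker(\tau)_j = \bigl(K_j \cap \mathcal A_{j+1}(K_1, G_*)\bigr)/K_{j+1}.
\]
Therefore $\tau$ is injective if and only if $K_j \cap \mathcal A_{j+1}(K_1, G_*) = K_{j+1}$ for every $j \geq 1$.

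It then remains to see that this family of equalities is equivalent to $K_* = \mathcal A_*(K_1, G_*)$. One direction is immediate: if $K_i = \mathcal A_i(K_1, G_*)$ for all $i$, then $K_j \cap \mathcal A_{j+1} = \mathcal A_j \cap \mathcal A_{j+1} = \mathcal A_{j+1} = K_{j+1}$. For the converse I would induct on $i$, the base case $i = 1$ being $K_1 = \mathcal A_1(K_1, G_*)$ noted above; assuming $K_i = \mathcal A_i(K_1, G_*)$, the inclusion $\mathcal A_{i+1} \subseteq \mathcal A_i = K_i$ yields $\mathcal A_{i+1} = K_i \cap \mathcal A_{i+1} = K_{i+1}$ by the degree-$i$ equality, which closes the induction.

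I expect the only delicate point to be the bookkeeping of indices: the kernel in degree $j$ is controlled by $\mathcal A_{j+1}$ rather than $\mathcal A_j$, and the quotient $\bigl(K_j \cap \mathcal A_{j+1}\bigr)/K_{j+1}$ only makes sense once $K_* \subseteq \mathcal A_*(K_1, G_*)$ is in place, which is why that inclusion must be established first. Beyond this careful indexing the argument is a direct unwinding of the definitions of $\tau$ and of $\mathcal A_*(K_1, G_*)$, so I anticipate no substantial obstacle.
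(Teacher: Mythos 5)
Your proof is correct, and it takes the expected route: this paper does not reprove the lemma (it quotes \cite[Lem.\ 1.28]{Darné}), and the argument there is the same direct unwinding you give. Identifying the degree-$j$ kernel of $\tau$ as $\bigl(K_j \cap \mathcal A_{j+1}(K_1,G_*)\bigr)/K_{j+1}$ (legitimate once $K_* \subseteq \mathcal A_*(K_1,G_*)$ is recorded) and then converting the vanishing of these kernels into $K_* = \mathcal A_*(K_1,G_*)$ by induction on the degree, with base case $K_1 = \mathcal A_1(K_1,G_*)$, is exactly the intended proof, including the $j$ versus $j+1$ index shift you flag as the only delicate point.
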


\begin{ex}\label{Johnson_Fn}
If $G $ is a free group, then $\Lie(F_n)$ is the free Lie algebra $\mathfrak LV$ on the $\Z$-module $V = G^{ab}$ \cite[th.\ 4.2]{Lazard}. 
It is also free with respect to derivations, which can be considered as sections of a suitable projection -- see for instance \cite{Reutenauer}. In particular:
\[\Der_k(\mathfrak LV) \cong \Hom_{\k}(V, \mathfrak L_k V) \cong V^*\otimes \mathfrak L_k V.\]
The Andreadakis filtration $\mathcal A_* = \mathcal A_*(F_n)$  is the universal one acting on $\Gamma_*(F_n)$. Thus the associated Johnson morphism is an embedding:
\begin{equation}
\tau: \Lie(\mathcal A_*) \hookrightarrow \Der(\mathfrak LV).
\end{equation}
\end{ex}

\section{Decomposition of an induced filtration}\label{Section_dec}

\subsection{General setting}

Let $G$ be a group endowed with a strongly central filtration $\mathcal A_*$. Let $H \rtimes K$ be a subgroup of $G$ decomposing as a semi-direct product. Then we get, on the one hand, a semi-direct product $(\mathcal A_* \cap H) \rtimes (\mathcal A_* \cap K)$ of strongly central series. On the other hand, $\mathcal A_* \cap (H \rtimes K)$ is a strongly central filtration on $H \rtimes K$ containing the previous one.

\begin{prop}\label{dec_of_induced_SCF}
In the above setting, the following assertions are equivalent:
\begin{enumerate}[label={(\roman*)}]
\item \label{item1} $\mathcal A_* \cap (H \rtimes K) = (\mathcal A_* \cap H) \rtimes (\mathcal A_* \cap K),$
\item \label{item2} Inside $\Lie(\mathcal A_*)$,\ \ $\Lie(\mathcal A_* \cap H) \cap \Lie(\mathcal A_* \cap K) = 0.$
\end{enumerate}
When they are satisfied, we will say that $H$ and $K$ are \emph{$\mathcal A_*$-disjoint}.
\end{prop}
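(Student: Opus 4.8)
The plan is to translate both assertions into equivalent degreewise statements inside the associated graded Lie ring and then to match them. First I would record the group-side reformulation. Each $\mathcal A_i$ is normal in $G=\mathcal A_1$ (since $[\mathcal A_1,\mathcal A_i]\subseteq\mathcal A_{i+1}$), so $\mathcal A_i\cap H$ is normalized by $\mathcal A_i\cap K$, and the degree-$i$ term of $(\mathcal A_*\cap H)\rtimes(\mathcal A_*\cap K)$ is the subgroup $(\mathcal A_i\cap H)(\mathcal A_i\cap K)$ of $H\rtimes K$, which is always contained in $\mathcal A_i\cap(H\rtimes K)$. Because the decomposition $g=hk$ (with $h\in H$, $k\in K$) in $H\rtimes K$ is unique, assertion \ref{item1} is equivalent to the implication $(\star)$: for all $i$, if $h\in H$, $k\in K$ and $hk\in\mathcal A_i$, then $h\in\mathcal A_i$ and $k\in\mathcal A_i$. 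On the Lie side, the injections $\mathcal A_*\cap H\hookrightarrow\mathcal A_*$ and $\mathcal A_*\cap K\hookrightarrow\mathcal A_*$ in $\mathcal{SCF}$ are sent by the exact functor $\Lie$ (Proposition \ref{exactness_of_L}) to embeddings of graded sub-Lie-rings of $L:=\Lie(\mathcal A_*)$, whose degree-$i$ parts are the images of $\mathcal A_i\cap H$ and $\mathcal A_i\cap K$ in $L_i=\mathcal A_i/\mathcal A_{i+1}$. As the intersection of graded submodules is graded, assertion \ref{item2} is equivalent to the statement $(\dagger)$: for all $i$, if $h\in\mathcal A_i\cap H$ and $k\in\mathcal A_i\cap K$ have the same image in $L_i$, then that image is $0$.

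The implication \ref{item1}$\Rightarrow$\ref{item2} is then immediate: if $h$ and $k$ are as in $(\dagger)$, their equality in $L_i$ means $h^{-1}k\in\mathcal A_{i+1}$; reading this as the canonical decomposition $h^{-1}\cdot k$ and applying $(\star)$ in degree $i+1$ yields $h\in\mathcal A_{i+1}$, so the common image vanishes.

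For the converse I would argue by ``no cancellation in top degree''. Given $h\in H$ and $k\in K$, both lie in $\mathcal A_1$; let $p,q\in\{1,2,\dots,\infty\}$ be their $\mathcal A_*$-depths (the largest indices with $h\in\mathcal A_p$ and $k\in\mathcal A_q$) and set $m=\min(p,q)$. If $m=\infty$, then $h$ and $k$ lie in every $\mathcal A_i$ and $(\star)$ is trivial, so assume $m<\infty$. Then $h,k\in\mathcal A_m$, hence $hk\in\mathcal A_m$, and its image in the abelian group $L_m$ is $\overline h+\overline k$. If $p\neq q$, exactly one of $\overline h,\overline k$ is nonzero, so $\overline{hk}\neq 0$; if $p=q=m$, then $\overline h$ and $\overline k$ are nonzero and lie respectively in the degree-$m$ parts of $\Lie(\mathcal A_*\cap H)$ and $\Lie(\mathcal A_*\cap K)$, so $(\dagger)$ forbids $\overline h+\overline k=0$ (which would force $\overline h=-\overline k$ to lie in their intersection). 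In every case $hk\notin\mathcal A_{m+1}$, that is, the depth of $hk$ equals $m=\min(p,q)$. Consequently $hk\in\mathcal A_i$ forces $m\geq i$, whence $h,k\in\mathcal A_i$, which is exactly $(\star)$.

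The main obstacle is precisely the equal-depth case $p=q$ in this last step: it is the only place where hypothesis \ref{item2} is used, serving exactly to prevent the leading terms of $h$ and $k$ in $L_m$ from cancelling. Everything else is bookkeeping about normality in the filtration and the uniqueness of the semi-direct decomposition.
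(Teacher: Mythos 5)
Your proof is correct and takes essentially the same approach as the paper's: the crucial step for \ref{item2}$\Rightarrow$\ref{item1} is the same ``no cancellation in top degree'' argument — if $h$ and $k$ had equal depth $m$ and $hk\in\mathcal A_{m+1}$, then $\bar h=-\bar k$ would be a nonzero element of $\Lie(\mathcal A_*\cap H)\cap\Lie(\mathcal A_*\cap K)$ — with the paper phrasing it contrapositively and you directly, your $p\neq q$ versus $p=q$ case split making explicit a point the paper leaves implicit. The only cosmetic difference is the easy direction, where the paper invokes exactness of $\Lie$ to split $\Lie(\mathcal A_*\cap(H\rtimes K))$ as a semi-direct product, while you argue elementwise by applying $(\star)$ in degree $i+1$; both are valid.
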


\begin{proof}
If \ref{item1} is true, then the subalgebra $\Lie(\mathcal A_* \cap (H \rtimes K))$ of $\Lie(A_*)$ decomposes as a semi-direct product of $\Lie(\mathcal A_* \cap H)$ by $\Lie(\mathcal A_* \cap K)$, thus \ref{item2} holds. 

Conversely, suppose \ref{item1} false. Then there exists $g = hk \in \mathcal A_j \cap (H \rtimes K)$, where neither $h$ nor $k$ belongs to $\mathcal A_j$. This means that $h \equiv k^{-1} \not\equiv 1 \pmod{\mathcal A_j}$. Then there exists $i < j$ such that $h, k \in \mathcal A_i - \mathcal A_{i+1}$, giving a counter-example to our second assertion: $\bar h = - \bar k \neq 0 \in \mathcal A_i/\mathcal A_{i+1}$.
\end{proof}

\subsection{Application to the Andreadakis problem}

We can apply Proposition \ref{dec_of_induced_SCF} to the case when $G = IA_n$ and $\mathcal A_*$ is the Andreadakis filtration. In that case, the Johnson morphism gives an embedding of $\Lie(\mathcal A_*)$ into $\Der(\mathfrak LV)$ (see Example \ref{Johnson_Fn}). Thus, we can check whether $H$ and $K$ are $\mathcal A_*$-disjoint by answering the following question: can an element of $K$ and an element of $H$ induce the same derivation of $\mathfrak LV$ ?

When the subgroups are $\mathcal A_*$-disjoint, then $\mathcal A_* \cap (H \rtimes K)$ is the semi-direct product of $\mathcal A_* \cap H$ by $\mathcal A_* \cap K$. Suppose moreover than the semi-direct product $H \rtimes K$ is an almost-direct one. Then the lower central series $\Gamma_* \cap (H \rtimes K)$ also decomposes as the semi-direct of $\Gamma_* \cap H$ by $\Gamma_* \cap K$. Thus, under these hypotheses, in order to show that $\mathcal A_* \cap (H \rtimes K) = \Gamma_* \cap (H \rtimes K)$, we just need to prove that $\mathcal A_* H = \Gamma_* H$ and that $\mathcal A_* K = \Gamma_* K$. We sum this up in the following:
\begin{theo}\label{Reducing_Andreadakis}
Let $H \rtimes K$ be a subgroup of $IA_n$. Suppose that:
\begin{enumerate}
\item the semi-direct product is an almost-direct one,
\item an element of $H$ and an element of $K$ cannot induce the same derivation of the free Lie algebra through the Johnson morphism,
\item the Andreadakis equality holds for $H$ and $K$,
\end{enumerate}
then the Andreadakis equality holds for $H \rtimes K$.
\end{theo}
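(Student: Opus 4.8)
The plan is to combine the two structural propositions already established, namely Proposition \ref{dec_of_induced_SCF} and the list of equivalences in Proposition-definition \ref{Gamma(PSD)}, so that the chain of inclusions \eqref{incl_on_subgroups} collapses level by level. The key observation is that hypothesis (2) is exactly the statement that $H$ and $K$ are $\mathcal A_*$-disjoint: by the discussion following Proposition \ref{dec_of_induced_SCF}, the embedding $\Lie(\mathcal A_*) \hookrightarrow \Der(\mathfrak LV)$ of Example \ref{Johnson_Fn} turns condition \ref{item2} into the assertion that no nonzero element of $\Lie(\mathcal A_* \cap H)$ equals an element of $\Lie(\mathcal A_* \cap K)$ inside $\Der(\mathfrak LV)$, which is precisely the impossibility in hypothesis (2). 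Hence Proposition \ref{dec_of_induced_SCF}\ref{item1} applies, giving the decomposition of the Andreadakis side:
\[
\mathcal A_* \cap (H \rtimes K) = (\mathcal A_* \cap H) \rtimes (\mathcal A_* \cap K).
\]

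Next I would treat the lower central series side. Hypothesis (1) says $H \rtimes K$ is almost-direct, so by the equivalence $\eqref{item:4}$ of Proposition-definition \ref{Gamma(PSD)} we have $\Gamma_i(H \rtimes K) = \Gamma_i H \rtimes \Gamma_i K$ for all $i$. I want the corresponding statement for the \emph{induced} filtration $\Gamma_* \cap (H \rtimes K)$ rather than for $\Gamma_*(H \rtimes K)$ itself; the point is that intersecting with $H \rtimes K$ is compatible with the semi-direct decomposition because $H$ is normal, so that $\Gamma_* \cap (H \rtimes K)$ splits as $(\Gamma_* \cap H) \rtimes (\Gamma_* \cap K)$, where here $\Gamma_*$ denotes the ambient lower central series $\Gamma_*(IA_n)$. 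This is the step where I would be most careful: I must check that the almost-directness, which is a statement about the \emph{internal} lower central series of $H \rtimes K$, correctly transports to the filtration induced from the ambient group, using that $\Gamma_*(H\rtimes K)\subseteq \Gamma_*(IA_n)\cap(H\rtimes K)$ together with the splitting.

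Finally I would assemble everything. By hypothesis (3), the Andreadakis equality holds for $H$ and for $K$ separately, meaning $\Gamma_*(H) = \mathcal A_* \cap H$ and $\Gamma_*(K) = \mathcal A_* \cap K$; combined with the first paragraph this already forces $\Gamma_*(H) \rtimes \Gamma_*(K) = (\mathcal A_* \cap H) \rtimes (\mathcal A_* \cap K) = \mathcal A_* \cap (H \rtimes K)$. Comparing with the decomposition of the induced lower central series from the second paragraph and chasing \eqref{incl_on_subgroups}, all three filtrations on $H \rtimes K$ coincide, which is the desired Andreadakis equality. The main obstacle is the bookkeeping in the middle paragraph: verifying that the \emph{induced} filtrations (intersections with the ambient $\mathcal A_*$ and $\Gamma_*(IA_n)$) genuinely respect the semi-direct product structure, so that the two decompositions — one from $\mathcal A_*$-disjointness and one from almost-directness — can be matched factor by factor and the local Andreadakis equalities glued into a global one.
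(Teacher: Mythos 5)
Your proof is correct and takes essentially the same route as the paper's: hypothesis (2), read through the Johnson embedding $\Lie(\mathcal A_*) \hookrightarrow \Der(\mathfrak LV)$, is exactly $\mathcal A_*$-disjointness, so Proposition \ref{dec_of_induced_SCF} gives $\mathcal A_* \cap (H \rtimes K) = (\mathcal A_* \cap H) \rtimes (\mathcal A_* \cap K)$; hypothesis (1) and Proposition \ref{Gamma(PSD)} give $\Gamma_*(H \rtimes K) = \Gamma_* H \rtimes \Gamma_* K$; hypothesis (3) matches the factors, and the chain \eqref{incl_on_subgroups} squeezes the middle term. One correction: the step you single out as the main obstacle --- decomposing the induced filtration $\Gamma_*(IA_n) \cap (H \rtimes K)$ --- is not needed at all (and normality of $H$ alone would not justify it): once the two extreme terms of \eqref{incl_on_subgroups} are shown equal, the middle term is automatically equal to both, so its compatibility with the semi-direct product structure is a consequence of the theorem rather than an input to it.
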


\section{First application: triangular automorphisms}\label{Section_triangular}

\begin{defi}\label{def_triangulaires}
Fix $(x_1,..., x_n)$ an ordered basis of $F_n$. The subgroup $IA_n^+$ of $IA_n$ consists of \emph{triangular automorphisms}, \emph{i.e.\ } automorphisms $\varphi$ acting as :
\[\varphi : x_i \longmapsto (x_i^{w_i})\gamma_i,\]
where $w_i \in \langle x_j \rangle_{j < i} \cong F_{i-1}$ et $\gamma_i \in \Gamma_2(F_{i-1})$.
\end{defi}

\subsection{Decomposition as an iterated almost-direct product}

Consider the subgroup of $IA_n^+$ of triangular automorphisms fixing every element of the basis, except for the $i$-th one. This subgroup is the kernel of the projection $IA_i^+ \rightarrow IA_{i-1}^+$ induced by $x_i \mapsto 1$. It is isomorphic to $\Gamma_2(F_{i-1}) \rtimes F_{i-1}$, the isomorphism being:
\[\left(\varphi: x_i \mapsto (x_i^w)\gamma \right) \longmapsto (\gamma, w).\]
Thus we obtain a short exact sequence:
\begin{equation}\label{dec_IA+}
\begin{tikzcd}
\Gamma_2(F_{n-1}) \rtimes F_{n-1} \ar[r, hook] &IA_n^+ \ar[r, two heads] &IA_{n-1}^+.
\end{tikzcd}
\end{equation}
This sequence is split: a section is given by automorphisms fixing $x_n$. Hence, we get a decomposition into a semi-direct product:
\begin{equation}\label{dec_IA+_psd}
IA_n^+ = (\Gamma_2(F_{n-1}) \rtimes F_{n-1})\rtimes IA_{n-1}^+.
\end{equation} 

\begin{lem} 
Let $G$ be a group. For any integer $i$: 
\[\Gamma_*(\Gamma_iG \rtimes G) = \Gamma_{*+i-1}(G) \rtimes \Gamma_*G.\]
\end{lem}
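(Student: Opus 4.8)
The plan is to read off the result directly from the general description of the lower central series of a semi-direct product (Proposition \ref{Gamma(PSD)_0}), and then to evaluate the relative filtration $\Gamma_*^G$ by a one-line induction. First I would note that $\Gamma_i G \rtimes G$ is the semi-direct product attached to the conjugation action of $G$ on its normal subgroup $\Gamma_i G$. Applying Proposition \ref{Gamma(PSD)_0} with $H = \Gamma_i G$ and $K = G$ then gives
\[\Gamma_*(\Gamma_i G \rtimes G) = \Gamma_*^G(\Gamma_i G) \rtimes \Gamma_* G,\]
where $\Gamma_*^G(\Gamma_i G)$ is the relative strongly central filtration of Proposition-definition \ref{SCD_relative}; recall that $\Gamma_*^G$ and $\Gamma_*^{\Gamma_i G \rtimes G}$ agree on $\Gamma_i G$, so the notation is unambiguous. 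Comparing with the desired identity, it remains only to show that $\Gamma_*^G(\Gamma_i G) = \Gamma_{*+i-1}(G)$.

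For this I would argue by induction on $k$ that $\Gamma_k^G(\Gamma_i G) = \Gamma_{k+i-1}(G)$. The base case $k = 1$ is the defining equality $\Gamma_1^G(\Gamma_i G) = \Gamma_i G = \Gamma_{1+i-1}(G)$. For the inductive step, the recursion of Proposition-definition \ref{SCD_relative} together with the defining recursion $\Gamma_{m+1}(G) = [G, \Gamma_m(G)]$ of the lower central series gives
\[\Gamma_{k+1}^G(\Gamma_i G) = [G, \Gamma_k^G(\Gamma_i G)] = [G, \Gamma_{k+i-1}(G)] = \Gamma_{k+i}(G) = \Gamma_{(k+1)+i-1}(G),\]
which closes the induction. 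Substituting $\Gamma_*^G(\Gamma_i G) = \Gamma_{*+i-1}(G)$ into the previous display yields the statement.

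There is no serious obstacle here: the whole content is packaged in Proposition \ref{Gamma(PSD)_0}, and the only points demanding (minor) care are keeping track of the index shift in the induction and observing that the group acting in the relative filtration is $G$ itself, acting by conjugation, so that each iterated commutator $[G, -]$ advances the lower central series of $G$ by exactly one step.
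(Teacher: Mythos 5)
Your proof is correct and follows exactly the paper's own route: invoke Proposition \ref{Gamma(PSD)_0} for the conjugation action of $G$ on $\Gamma_i G$, and compute $\Gamma_*^G(\Gamma_i G) = \Gamma_{*+i-1}(G)$ from the recursions defining $\Gamma_*^G$ and the lower central series. The only difference is that you spell out the inductive index-shift that the paper dismisses as immediate, which is fine.
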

\begin{proof}
From the definition of the lower central series and from Definition \ref{SCD_relative} ($G$ acting on $\Gamma_iG$ by conjugation), we immediately deduce the following equality, true for every $i$:
\[\Gamma_*^G(\Gamma_iG) = \Gamma_{*+i-1}(G).\] 
The result follows, by Proposition \ref{Gamma(PSD)_0}.
\end{proof}

In particular, for $G = F_n$ and $i = 2$, this determines the lower central series of $\Gamma_2(F_{n-1}) \rtimes F_{n-1}$:
\[\Gamma_*(\Gamma_2(F_{n-1}) \rtimes F_{n-1}) = \Gamma_{*+1}(F_{n-1}) \rtimes \Gamma_* F_{n-1}.\]
Whence the following description of its abelianization: 
\[(\Gamma_2(F_{n-1}) \rtimes F_{n-1})^{ab} = \Gamma_2/\Gamma_3(F_{n-1}) \times F_{n-1}^{ab} = (\Lie_2 \oplus \Lie_1)(F_{n-1}).\] 
The extension \eqref{dec_IA+} induces an action of $IA_{n-1}^+$ on this abelianization. One can easily check that this action is none other than the diagonal action induced by the canonical action of $IA_{n-1}^+$ on the Lie algebra of $F_{n-1}$. This action is trivial, by definition of $IA_{n-1}$, which means that the semi-direct product \eqref{dec_IA+_psd} is an almost-direct one. Thus, Proposition \ref{Gamma(PSD)} allows us to describe the lower central series of $IA_n^+$:
\begin{prop}
For every integer $n$:
\begin{align*}
\Gamma_*(IA_n^+) 
&= \Gamma_*(\Gamma_2(F_{n-1}) \rtimes F_{n-1})\rtimes \Gamma_*(IA_{n-1}^+) \\
&= (\Gamma_{*+1}(F_{n-1}) \rtimes \Gamma_*F_{n-1}))\rtimes \Gamma_*(IA_{n-1}^+).
\end{align*}
In particular, the Lie algebra of $IA_n^+$ decomposes as:
\[\Lie(IA_n^+) = (\mathfrak L_{*+1}(\Z^{n-1}) \rtimes \mathfrak L_*(\Z^{n-1}))\rtimes \Lie(IA_{n-1}^+).\]
\end{prop}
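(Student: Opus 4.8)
The plan is to prove this by induction on $n$, using the iterated almost-direct product decomposition \eqref{dec_IA+_psd} together with the machinery already assembled. The base case is trivial since $IA_1^+$ is itself trivial (there is nothing below $x_1$). For the inductive step, I assume the formula for $\Gamma_*(IA_{n-1}^+)$ and combine two pieces: the computation of $\Gamma_*(\Gamma_2(F_{n-1}) \rtimes F_{n-1})$ supplied by the preceding lemma (with $G = F_{n-1}$, $i=2$), and the fact — established in the paragraph just before the statement — that the semi-direct product \eqref{dec_IA+_psd} is an \emph{almost-direct} one. That last fact is exactly the hypothesis needed to invoke Proposition-definition \ref{Gamma(PSD)}.

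First I would apply Proposition-definition \ref{Gamma(PSD)}, specifically the equivalence of condition \eqref{item:4}, to the almost-direct product $IA_n^+ = \bigl(\Gamma_2(F_{n-1}) \rtimes F_{n-1}\bigr) \rtimes IA_{n-1}^+$. This immediately yields
\[
\Gamma_i(IA_n^+) = \Gamma_i\bigl(\Gamma_2(F_{n-1}) \rtimes F_{n-1}\bigr) \rtimes \Gamma_i(IA_{n-1}^+)
\]
for every $i$, which packages as the first displayed equality. Next I would substitute the preceding lemma's output, $\Gamma_*\bigl(\Gamma_2(F_{n-1}) \rtimes F_{n-1}\bigr) = \Gamma_{*+1}(F_{n-1}) \rtimes \Gamma_*(F_{n-1})$, to obtain the second displayed equality. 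To pass to the Lie algebra statement, I would apply the Lie functor and use its exactness (Proposition \ref{exactness_of_L}), together with its corollary that $\Lie$ preserves actions: since an almost-direct product has $\Lie(H \rtimes K) \cong \Lie(H) \rtimes \Lie(K)$ (condition \eqref{item:5}), the decomposition of $\Gamma_*(IA_n^+)$ transports termwise to a semi-direct-product decomposition of Lie rings, with $\Lie(\Gamma_{*+1}(F_{n-1})) = \mathfrak L_{*+1}(\Z^{n-1})$ and $\Lie(\Gamma_*(F_{n-1})) = \mathfrak L_*(\Z^{n-1})$ by Example \ref{Johnson_Fn}.

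I expect no genuine obstacle here: every ingredient is already in place, and the proof is essentially an assembly of the preceding results in the correct order. The one point requiring slight care is confirming that the semi-direct product \eqref{dec_IA+_psd} really is almost-direct, i.e.\ that condition \eqref{item:1} of Proposition-definition \ref{Gamma(PSD)} holds — but this is precisely the verification carried out in the paragraph preceding the statement, where the action of $IA_{n-1}^+$ on the abelianization $(\Lie_2 \oplus \Lie_1)(F_{n-1})$ is identified with the diagonal action induced by the canonical $IA_{n-1}^+$-action on $\Lie(F_{n-1})$, which is trivial by definition of $IA_{n-1}$. Given that, the remaining work is a mechanical application of Proposition-definition \ref{Gamma(PSD)} and the exactness of $\Lie$, so the argument is short.
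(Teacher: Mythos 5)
Your proposal is correct and follows essentially the same route as the paper: the paper's (implicit) proof is exactly the preceding paragraph, namely the verification that the semi-direct product \eqref{dec_IA+_psd} is almost-direct, followed by Proposition \ref{Gamma(PSD)} and the lemma computing $\Gamma_*(\Gamma_2(F_{n-1}) \rtimes F_{n-1})$, with exactness of $\Lie$ giving the Lie-ring statement. The only cosmetic difference is that your induction on $n$ is unnecessary, since the statement only expresses $\Gamma_*(IA_n^+)$ in terms of $\Gamma_*(IA_{n-1}^+)$ and never needs the latter unfolded.
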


\subsection{The Andreadakis equality}

Our aim is to use the decomposition into an almost-direct product described in the previous paragraph to recover the main result of \cite{Satoh_triangulaire}:
\begin{theo}\label{Andreadakis_for_triangulars}
The subgroup of triangular automorphisms satisfies the Andreadakis equality, that is:
\[\Gamma_*(IA_n^+) = \Gamma_*(IA_n)\cap IA_n^+ = \mathcal A_* \cap IA_n^+ = \mathcal A_*(IA_n^+, \Gamma_*(F_n)).\]
\end{theo}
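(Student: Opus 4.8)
The plan is to use the inductive structure provided by the iterated almost-direct product decomposition \eqref{dec_IA+_psd}, combined with the reduction criterion of Theorem \ref{Reducing_Andreadakis}. Writing $IA_n^+ = (\Gamma_2(F_{n-1}) \rtimes F_{n-1}) \rtimes IA_{n-1}^+$, I would prove the Andreadakis equality for $IA_n^+$ by induction on $n$, the base case $n=1$ (or $n=2$) being trivial since the groups involved are free or abelian. The almost-direct product hypothesis of Theorem \ref{Reducing_Andreadakis} has already been verified in the previous paragraph. So the induction reduces, via Theorem \ref{Reducing_Andreadakis}, to three tasks at each stage: establishing the Andreadakis equality for the two factors $H = \Gamma_2(F_{n-1}) \rtimes F_{n-1}$ and $K = IA_{n-1}^+$, and checking the $\mathcal A_*$-disjointness condition (hypothesis 2) between them.

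For the factor $K = IA_{n-1}^+$, the Andreadakis equality holds by the inductive hypothesis. For the factor $H = \Gamma_2(F_{n-1}) \rtimes F_{n-1}$, I would first observe that this is itself an almost-direct product (since $F_{n-1}$ acts trivially on the abelianization $\Gamma_2/\Gamma_3(F_{n-1})$ of $\Gamma_2(F_{n-1})$, as both sit inside the Lie algebra of $F_{n-1}$ on which the conjugation action is by the adjoint representation which is trivial on the relevant graded piece modulo the next). Then I would again invoke Theorem \ref{Reducing_Andreadakis}: the factors $\Gamma_2(F_{n-1})$ and $F_{n-1}$ are free or a term of the lower central series of a free group, for which the Andreadakis equality is known (these embed into $IA_n$ via the inner/conjugation-type actions, and their Lie algebras embed freely into $\Der(\mathfrak L V)$ by Example \ref{Johnson_Fn}), and their $\mathcal A_*$-disjointness must be checked separately. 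In effect the whole proof is a nested double application of the reduction theorem.

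The main obstacle, and the heart of the argument, is verifying hypothesis 2 of Theorem \ref{Reducing_Andreadakis} at each stage: that an element of $H$ and an element of $K$ cannot induce the same nonzero derivation of the free Lie algebra $\mathfrak L V = \Lie(F_n)$ through the Johnson morphism. Concretely, using the embedding $\tau \colon \Lie(\mathcal A_*) \hookrightarrow \Der(\mathfrak L V) \cong V^* \otimes \mathfrak L V$ from Example \ref{Johnson_Fn}, I would analyze the support of the derivations coming from each factor. The triangular structure is the key leverage here: an automorphism in the ``top'' factor $H$ only modifies the image of the last basis element $x_n$ (it fixes $x_1, \dots, x_{n-1}$), so its associated derivation is supported on the $x_n^*$-component of $V^* \otimes \mathfrak L V$, whereas an element of $K = IA_{n-1}^+$ fixes $x_n$ and acts only through $x_1^*, \dots, x_{n-1}^*$. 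Since these two coordinate subspaces of $V^* \otimes \mathfrak L V$ intersect trivially, a common derivation must vanish, giving $\mathcal A_*$-disjointness. I expect this coordinate/support argument to require care in tracking exactly which graded pieces $\mathfrak L_k V$ appear and in confirming that the free generation of $\Der(\mathfrak L V)$ (freeness with respect to the choice of images of the generators) genuinely separates the contributions of the distinct basis vectors. Once disjointness is established, the final identification $\mathcal A_* \cap IA_n^+ = \Gamma_*(IA_n^+) = \mathcal A_*(IA_n^+, \Gamma_* F_n)$ follows formally from Proposition \ref{dec_of_induced_SCF} and Theorem \ref{Reducing_Andreadakis}, the last equality being the statement that the induced Andreadakis filtration coincides with the intrinsic one.
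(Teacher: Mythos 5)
Your top-level scheme --- induction on $n$ via the decomposition \eqref{dec_IA+_psd}, Theorem \ref{Reducing_Andreadakis}, and $\mathcal A_*$-disjointness obtained by observing that a derivation induced by both factors would kill every $x_i$ --- is exactly the paper's. The genuine gap is in your treatment of the first factor $H = \Gamma_2(F_{n-1}) \rtimes F_{n-1}$, which you propose to handle by a second, nested application of Theorem \ref{Reducing_Andreadakis}. Both hypotheses you would need there are false. First, $H$ is \emph{not} an almost-direct product: the abelianization of $\Gamma_2(F_{n-1})$ is $\Gamma_2/[\Gamma_2,\Gamma_2]$ (a free abelian group of infinite rank), not $\Gamma_2/\Gamma_3$, and the conjugation action of $F_{n-1}$ on it is non-trivial, since for instance ${}^{x_1}[x_1,x_2]\cdot[x_1,x_2]^{-1} = [x_1,[x_1,x_2]]$ lies in $\Gamma_3(F_{n-1}) \smallsetminus \Gamma_4(F_{n-1})$, hence not in $[\Gamma_2,\Gamma_2] \subseteq \Gamma_4$. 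This failure is visible in the paper itself: the lemma before the theorem gives $\Gamma_*(H) = \Gamma_{*+1}(F_{n-1}) \rtimes \Gamma_*(F_{n-1})$, a \emph{shifted} filtration on the kernel, not $\Gamma_*(\Gamma_2(F_{n-1})) \rtimes \Gamma_*(F_{n-1})$ as condition \eqref{item:4} of Proposition \ref{Gamma(PSD)} would demand. Second, the Andreadakis equality is \emph{false} for the factor $\Gamma_2(F_{n-1})$, embedded as $\gamma \mapsto (x_n \mapsto x_n\gamma)$: one computes $\mathcal A_j \cap \Gamma_2(F_{n-1}) = \Gamma_{j+1}(F_{n-1})$ for all $j \geq 1$, whereas already $\Gamma_2(\Gamma_2(F_{n-1})) = [\Gamma_2,\Gamma_2] \subseteq \Gamma_4 \subsetneq \Gamma_3$ as soon as $n-1 \geq 2$. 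So this factor is itself one of the ``bad'' subgroups (of the same kind as the cyclic subgroups of $\Gamma_2(IA_n)$ mentioned in the introduction), and no reduction of the proposed kind can apply to it.

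What the paper does instead --- and this is the actual heart of its proof, which your proposal is missing --- is a direct computation of $\mathcal A_* \cap H$. For $\psi\colon x_n \mapsto x_n^w\gamma$ in $H$, one has $\psi \in \mathcal A_j$ if and only if $\gamma \equiv [x_n,w] \pmod{\Gamma_{j+1}(F_n)}$; if $w \in \Gamma_k \smallsetminus \Gamma_{k+1}$ with $k < j$, then $[\bar x_n, \bar w]$ is a non-zero element of $\Lie_{k+1}(F_n)$ involving $\bar x_n$, so it cannot equal $\bar\gamma \in \Lie(F_{n-1})$; hence $w \in \Gamma_j(F_{n-1})$ and $\gamma \in \Gamma_{j+1}(F_{n-1})$, which by the shifted description above says precisely $\psi \in \Gamma_j(H)$. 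The moral is that the Andreadakis equality holds for $H$ even though it fails for one of its two factors and even though $H$ is not an almost-direct product of them --- a configuration that your nested use of Theorem \ref{Reducing_Andreadakis} cannot reach. Your support argument in $V^* \otimes \mathfrak L V$ is sound for the top-level disjointness (it is the paper's argument), but it does not substitute for this free-Lie-algebra computation inside the factor $H$.
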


\begin{proof}

Consider the decomposition \eqref{dec_IA+_psd}. We want to apply Theorem \ref{Reducing_Andreadakis} to it. First, we need to show that the factors are $\mathcal A_*$-disjoint. But this is obvious : each $\varphi \in IA_{n-1}^+$ satisfies $[\varphi,x_n] = 1$, whereas each element $\psi$ in the other factor satisfies $[\varphi,x_i] = 1$ for $i < n$. Thus, a derivation $d$ coming from both factors would satisfy $d(x_i) = 0$ for every $i$, so it must be trivial.

We are thus reduced to showing the Andreadakis equality for the first factor, and the result will follow by induction.
Let $\psi: x_n \mapsto x_n^w \cdot\gamma$ be an element of this factor ($w \in F_{n-1}$, $\gamma \in  \Gamma_2 F_{n-1}$, and $\psi$ fixes the other generators). Then:
\begin{align*}
\varphi \in \mathcal A_j 
&\ \Leftrightarrow\ \varphi(x_n) \equiv x_n \pmod{\Gamma_{j+1}(F_n)}\\
&\ \Leftrightarrow\ \gamma \equiv [x_n,w] \pmod{\Gamma_{j+1}(F_n)}.
\end{align*}
We claim that this is possible only if $\gamma \in \Gamma_{j+1}(F_n)$ and $w \in \Gamma_j(F_n)$. Indeed, let $k$ such that $w \in \Gamma_k-\Gamma_{k+1}$ (such a $k$ exists because $F_n$ is residually nilpotent). If we had $k<j$, then: 
\[0 \neq \overline w \in \Lie_k(F_{n-1}) \subseteq \Lie_k(F_n).\]
Since $\Lie_*(F_n)$ is the free Lie algebra over the $\overline x_s$, and $\overline x_n$ does not appear in $\overline w$, then $[\overline x_n,\overline w] \neq 0 \in \Lie_{k+1}(F_n)$, and $[\overline x_n,\overline w]$, containing $\bar x_n$, cannot be in $\Lie(F_{n-1})$. In particular, it cannot be equal to $\overline \gamma$, which contradicts this hypothesis. Thus we must have $k \geq j$, that is $w \in \Gamma_j$, and $\gamma \equiv [x_n, w] \in \Gamma_{j+1}.$

Using the description of the lower central series of $\Gamma_2(F_{n-1}) \rtimes F_{n-1}$ from the previous paragraph, we see that this means exactly that $\psi \in \Gamma_j(\Gamma_2(F_{n-1}) \rtimes F_{n-1})$, whence the Andreadakis equality for this subgroup, which is the desired conclusion.
\end{proof}

We can also state the Andreadakis equality for the triangular McCool subgroup, studied in \cite{CPVW}. It is not a consequence of the previous theorem, but of its proof. It was not obvious from the proof given in \cite{Satoh_triangulaire} ; however, by Lemma \ref{tau_inj}, it is equivalent to the injectivity of the Johnson morphism $\mathcal L (P \Sigma_n^+) \rightarrow \Der(LV)$, that was showed in \cite[Cor. 6.3]{CHP}.

\begin{cor}\label{Andreadakis_for_triangular_McCool}
The subgroup $P \Sigma_n^+$ of triangular basis-conjugating automorphisms satisfies the Andreadakis equality, that is:
\[\Gamma_*(P \Sigma_n^+) = \Gamma_*(IA_n)\cap P \Sigma_n^+ = \mathcal A_* \cap P \Sigma_n^+ = \mathcal A_*(P \Sigma_n^+, \Gamma_*(F_n)).\]
\end{cor}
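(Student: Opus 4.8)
The plan is to mirror the proof of Theorem \ref{Andreadakis_for_triangulars}, replacing the decomposition \eqref{dec_IA+_psd} by the sub-decomposition obtained when one discards the $\Gamma_2(F_{n-1})$-factor. First I would identify the kernel of the projection $P\Sigma_n^+ \twoheadrightarrow P\Sigma_{n-1}^+$ induced by $x_n \mapsto 1$: it consists of the basis-conjugating automorphisms $x_n \mapsto x_n^w$ fixing the other generators, with $w \in F_{n-1}$, and the correspondence $\psi \mapsto w$ identifies this kernel with $F_{n-1}$. A section by automorphisms fixing $x_n$ then yields
\[P\Sigma_n^+ = F_{n-1} \rtimes P\Sigma_{n-1}^+,\]
which is precisely the restriction of \eqref{dec_IA+_psd} to basis-conjugating automorphisms. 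This is exactly why the corollary follows from the proof of Theorem \ref{Andreadakis_for_triangulars} rather than from its statement: the first factor is now $F_{n-1}$ rather than $\Gamma_2(F_{n-1}) \rtimes F_{n-1}$.

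Next I would verify the three hypotheses of Theorem \ref{Reducing_Andreadakis}. The almost-direct property is inherited from the triangular case: since $P\Sigma_{n-1}^+ \subseteq IA_{n-1}^+ \subseteq IA_{n-1}$ acts trivially on $\Lie_1(F_{n-1}) = F_{n-1}^{ab}$, the induced action on the abelianization of the first factor is trivial, so Proposition \ref{Gamma(PSD)} applies and $\Gamma_*$ decomposes compatibly. The $\mathcal A_*$-disjointness is proved verbatim as before: every $\varphi \in P\Sigma_{n-1}^+$ fixes $x_n$, whereas every element of the first factor fixes $x_i$ for $i < n$, so a derivation arising from both factors annihilates all the generators and must vanish.

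The only genuinely new point is the Andreadakis equality for the first factor $F_{n-1}$, and this is exactly the commutator computation already carried out in the proof of Theorem \ref{Andreadakis_for_triangulars}, specialized to $\gamma = 1$: for $\psi : x_n \mapsto x_n^w$ one has $\psi \in \mathcal A_j$ if and only if $[x_n, w] \equiv 1 \pmod{\Gamma_{j+1}(F_n)}$, and the argument using the freeness of $\Lie(F_n)$ over the $\overline{x}_s$ forces $w \in \Gamma_j(F_{n-1})$. Since the first factor is $F_{n-1}$ equipped with its own lower central series, this reads $\mathcal A_* \cap F_{n-1} = \Gamma_*(F_{n-1})$, which is the Andreadakis equality for $F_{n-1}$. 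With all three hypotheses in hand, Theorem \ref{Reducing_Andreadakis} propagates the equality from $P\Sigma_{n-1}^+$ to $P\Sigma_n^+$, and an induction on $n$ (with trivial base case $P\Sigma_1^+ = 1$) completes the argument. I expect no real obstacle here: the whole proof is a faithful transcription of the triangular case, the single point requiring attention being the observation that the key computation survives intact under the specialization $\gamma = 1$. (Alternatively, by Lemma \ref{tau_inj} the statement is equivalent to the injectivity of the Johnson morphism for $P\Sigma_n^+$, established in \cite{CHP}.)
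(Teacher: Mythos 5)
Your proposal is correct and is essentially the paper's own proof written out in full: the paper's argument is precisely to rerun the proof of Theorem \ref{Andreadakis_for_triangulars} with all $\gamma$, $\gamma_i$ set to $1$ and the $\Gamma_2(F_{n-1})$-factors discarded, yielding the decomposition $P\Sigma_n^+ = F_{n-1} \rtimes P\Sigma_{n-1}^+$ and the same disjointness and commutator arguments you give. Your closing parenthetical (equivalence with injectivity of the Johnson morphism via Lemma \ref{tau_inj}, proved in \cite{CHP}) is also exactly the alternative route the paper notes in the paragraph preceding the corollary.
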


\begin{proof}
In the proof of Theorem \ref{Andreadakis_for_triangulars}, consider only those factors corresponding to basis-conjugating automorphisms : take all $\gamma$ and $\gamma_i$ to be $1$, and forget the factors $\Gamma_2(F_{n-1})$ corresponding to these elements.
\end{proof}

\section{Second application: the pure braid group}\label{Section_braids}

We refer to \cite{Birman} or the more recent \cite{Birman-Brendle} for a detailed introduction to braid groups. As usual, we denote by $B_n$ Artin's braid group, generated by the $\sigma_i$ ($1 \leq i < n$), and by $P_n$ the subgroup of pure braids, generated by the $A_{ij}$ ($1 \leq i < j \leq n$). 
Recall the geometric description of the generators: 
\medskip

$\begin{array}{|C|C|}
\hline
$\sigma_i$ &$A_{ij} = {}^{(\sigma_{n-1} \cdots \sigma_{i+1})}\!\sigma_i^2$ \\
\hline
\begin{tikzpicture}
\path (2, 3) (2, -.5);
\node[draw=none] at (-.8,2.2) {i-1};
\node[draw=none] at (0,2.2) {i};
\node[draw=none] at (1,2.2) {i+1};
\node[draw=none] at (1.8,2.2) {i+2};

\node[draw=none] at (-1.5,1) {$\cdots$};
\node[draw=none] at (2.5,1) {$\cdots$};

\begin{knot}[clip width = 6, flip crossing = 1]
\strand[thick]  (-.8, 0)  -- (-.8, 2);
\strand[thick]  (0, 0) .. controls  +(0, 1) and +(0, -1) .. (1, 2);
\strand[thick] (1 ,0) .. controls  +(0, 1) and +(0, -1) .. (0, 2);
\strand[thick]  (1.8, 0)  -- (1.8, 2);
\end{knot}
\end{tikzpicture}
&
\begin{tikzpicture}
\path (2, 4) (2, -.5);
\node[draw=none] at (0,3.3) {i};
\node[draw=none] at (5,3.3) {j};

\node[draw=none] at (-1.5,1.5) {$\cdots$};
\node[draw=none] at (2,1.5) {$\cdots$};
\node[draw=none] at (6.2,1.5) {$\cdots$};

\begin{knot}[clip width = 6, flip crossing = 1]
\strand[thick]  (-.8, 0)  -- (-.8, 3);
\strand[thick]  (0, 0)  -- (0, 3);
\strand[thick]  (1, 0)  -- (1, 3);
\strand[thick]  (3, 0)  -- (3, 3);
\strand[thick]  (4, 0)  -- (4, 3);
\strand[thick]  (5, 0)  .. controls  +(0, 1) and +(0, -1) .. (-.4, 1.5) .. controls  +(0, 1) and +(0, -1) .. (5, 3);
\strand[thick]  (5.5, 0)  -- (5.5, 3);
\end{knot}
\end{tikzpicture}  \\ \hline
\end{array}
$

\medskip

An embedding of $B_n$ into $B_{n+1}$ is given by sending  $\sigma_i$ to $\sigma_i$ (it identifies $B_n$ to the subgroup of braids on the first $n$ strings). Inside $B_{n+1}$, the $A_{i,n+1} =: x_i$ generate a free group $F_n$, which is stable under conjugation by elements of $B_n$. This conjugation action is called the \emph{Artin action} ; explicitely, $\sigma_i$ acts \emph{via} the automorphism:
\begin{equation}\label{Artin_action}
\left\{\begin{array}{lll}
                 x_i     &\longmapsto & {}^{x_i}\! x_{i+1}. \\
				 x_{i+1} &\longmapsto & x_i   
		 \end{array} \right.
\end{equation}
That $\langle A_{i,n+1} \rangle$ is a free group can be seen using a geometric argument: this subgroup is the kernel of the projection of $P_{n+1}$ onto $P_n$ obtained by forgetting the $n$-th string, and this kernel identifies canonically with $\pi_1(\mathbb R^2 - \{n \text{ points}\})$ \cite[th. 1.4]{Birman}.
The above surjection of $P_{n+1}$ onto $P_n$ is split, a splitting been given by the above inclusion of $B_n$ into $B_{n+1}$. We thus get a decomposition of the pure braid group as a semi-direct product:
\begin{equation}\label{dec_Pn}
P_{n+1} = P_n \ltimes F_n.
\end{equation}

This decomposition allows us to write any $\beta$ in $P_n$ uniquely as $\beta'\beta_n$, with $\beta' \in P_n$ and $\beta_n \in \langle A_{1, n}, ..., A_{n-1, n}\rangle \cong F_{n-1}$. Iterating this, we obtain a unique decomposition of $\beta$ as:
\[\beta = \beta_1 \cdots \beta_n,\ \text{ avec }\ \beta_k \in \langle A_{1, k}, ..., A_{k-1, k}\rangle \cong F_{k-1}.\]
We then say that we have \emph{combed} the braid $\beta$.
This is key in the proof of the following:
\begin{prop}\label{Artin_fidèle}
The Artin action of $B_n$ on $F_n$ is faithful. 
\end{prop}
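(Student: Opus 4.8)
The plan is to prove that $\rho\colon B_n\to\Aut(F_n)$ is injective by induction on $n$, using the combing just described, which rests on the semidirect decomposition $P_n=P_{n-1}\ltimes U_n$ with $U_n:=\langle A_{1,n},\dots,A_{n-1,n}\rangle=\ker(P_n\twoheadrightarrow P_{n-1})$ a free group of rank $n-1$ (this is \eqref{dec_Pn} reindexed, the projection being obtained by forgetting the $n$-th strand). The base case $n\leq 2$ is immediate: $B_1$ is trivial and $\sigma_1^{2m}=A_{12}^m$ moves $x_2$ for $m\neq0$. For the inductive step, suppose $\beta\in B_n$ acts trivially on $F_n$. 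Letting $\rho(\beta)$ act on $F_n^{ab}\cong\Z^n$ recovers the underlying permutation of $\beta$; as $\rho(\beta)=\mathrm{id}$ this permutation is trivial, so $\beta\in P_n$. I would then comb $\beta=\gamma\,\beta_n$ with $\gamma=\beta_1\cdots\beta_{n-1}\in P_{n-1}$ and $\beta_n\in U_n$, and show separately that $\gamma=1$ and $\beta_n=1$.

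To show $\gamma=1$, consider the quotient $q\colon F_n\twoheadrightarrow F_{n-1}=\langle x_1,\dots,x_{n-1}\rangle$ killing $x_n$, whose kernel is the normal closure $\langle\langle x_n\rangle\rangle$. Since $P_{n-1}\subseteq\langle\sigma_1,\dots,\sigma_{n-2}\rangle$ and each such $\sigma_i$ fixes $x_n$ and preserves $\langle x_1,\dots,x_{n-1}\rangle$ (by \eqref{Artin_action}), the automorphism $\rho(\gamma)$ preserves $\langle\langle x_n\rangle\rangle$ and descends along $q$ to the $(n-1)$-strand Artin action $\rho_{n-1}(\gamma)$. On the other hand $U_n$, being the kernel of the strand-forgetting map, acts trivially modulo $\langle\langle x_n\rangle\rangle$ (compatibility of the Artin actions with forgetting a strand). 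Applying $q$ to the identities $\rho(\beta)(x_k)=x_k$ for $k<n$ therefore yields $\rho_{n-1}(\gamma)(x_k)=x_k$ for all $k<n$, i.e.\ $\rho_{n-1}(\gamma)=\mathrm{id}$; by the induction hypothesis $\gamma=1$, so $\beta=\beta_n\in U_n$.

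It remains to prove that $\rho$ is faithful on $U_n$, for which I would examine the action on the single generator $x_n$. Using the explicit formulas, each $A_{i,n}$ conjugates $x_n$ by a word $c_i$ (of the form $x_i x_{i+1}\cdots x_{n-1}$ in the present conventions), so $\rho(u)(x_n)=G(u)\,x_n\,G(u)^{-1}$ defines a $1$-cocycle $G\colon U_n\to F_n$. Reducing modulo $\langle\langle x_n\rangle\rangle$ and using that $U_n$ acts trivially there makes $\overline G$ a homomorphism $U_n\to F_{n-1}$ carrying the free basis $(A_{i,n})$ to $(\overline c_i)$, which is again a free basis of $F_{n-1}$ (a triangular change of variables from the $x_i$); hence $\overline G$ is an isomorphism. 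Since $\rho(\beta_n)=\mathrm{id}$ forces $G(\beta_n)$ to centralize $x_n$, we get $G(\beta_n)\in\langle x_n\rangle\subseteq\langle\langle x_n\rangle\rangle$, so $\overline G(\beta_n)=1$ and thus $\beta_n=1$. Together with $\gamma=1$ this gives $\beta=1$, closing the induction. The genuinely technical point, and the step I expect to be the main obstacle, is precisely this last one: pinning down the action of the last-strand generators $A_{i,n}$ on $x_n$ and checking that the resulting conjugators reduce to a free basis of $F_{n-1}$; everything else is formal. (One could alternatively invoke the topological fact that a mapping class of the punctured disk acting trivially on $\pi_1$ and fixing the boundary is trivial, but that lies outside the combinatorial framework used here.)
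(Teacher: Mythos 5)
Your proof is correct, but it takes a genuinely different route from the paper's. The paper's own argument begins the same way (triviality on $F_n^{ab}$ forces $\beta \in P_n$, then comb $\beta = \beta_1\cdots\beta_n$), but then it works entirely inside $B_{n+1}$ and avoids induction: taking $i$ maximal with $\beta_i \neq 1$, the pure braid relations give that $A_{i,n+1}$ commutes with every $\beta_k$ for $k<i$, so triviality of the action yields $1 = [\beta, A_{i,n+1}] = [\beta_i, A_{i,n+1}]$; since $\beta_i$ and $A_{i,n+1}$ both lie in the free subgroup $\langle A_{1,i},\dots,A_{i-1,i},A_{i,i+1},\dots,A_{i,n+1}\rangle$ of $B_{n+1}$ (the kernel of forgetting the $i$-th strand) and $\beta_i$ is a word omitting the basis element $A_{i,n+1}$, commutation forces $\beta_i = 1$, a contradiction. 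Thus the paper never needs explicit formulas for the Artin action: the commutation relations, the freeness of the strand-forgetting kernels, and the centralizer-of-a-basis-element fact carry the whole argument in one shot. Your induction instead splits the problem after one level of combing: the $P_{n-1}$-component dies by descending to $F_n/\langle\langle x_n\rangle\rangle \cong F_{n-1}$, and the $U_n$-component dies via the cocycle $G$ and the same centralizer fact, now applied inside $F_n$ rather than inside $B_{n+1}$. Your version demands more knowledge of the action itself (the conjugators $c_i$ and the compatibility with strand deletion, both true and checkable from the pure braid relations recorded in the paper's appendix), but it stays within $\Aut(F_n)$ and yields as a by-product that $\overline G$ identifies $U_n$ with $F_{n-1}$ --- essentially the ``parallel'' description that resurfaces in the paper's remark on Milnor invariants. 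Two small repairs to your write-up: since $U_n$ acts trivially modulo $\langle\langle x_n\rangle\rangle$, the reduced map satisfies $\overline G(uv) = \overline G(v)\,\overline G(u)$, so it is an anti-homomorphism (compose with inversion to get an honest isomorphism; injectivity, which is all you use, is unaffected); and the claimed form $c_i = x_i x_{i+1}\cdots x_{n-1}$ is convention-dependent (with the relations of the paper's appendix one finds that $A_{i,n}$ conjugates $x_n$ by $(x_i x_n)^{-1}$, so $\overline c_i = x_i^{-1}$), but your argument only needs that the $\overline c_i$ form a basis of $F_{n-1}$, which holds in any convention.
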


We can thus embed $B_n$ into $\Aut(F_n)$. We will often identify $B_n$ with its image in $\Aut(F_n)$, even if this embedding depends on the choice of an ordered basis of $F_n$. The corresponding action of $B_n$ on $F_n^{ab}$ is by permutation of the corresponding basis. Thus, for any choice of basis:
\begin{equation}\label{braids_and_IA}
B_n \cap IA_n = P_n.
\end{equation}

\subsection{Decomposition as an iterated almost-direct product}\label{SCD_de_Pn}

Consider the decomposition \eqref{dec_Pn} of the pure braid group. Since $P_n$ acts through $IA_n$ on $F_n$ \eqref{braids_and_IA}, this decomposition is an almost-direct-product. From Proposition \ref{Gamma(PSD)}, we deduce that the lower central series also decomposes:
\[\forall j,\ \Gamma_j(P_n) = \Gamma_j(P_{n-1}) \ltimes \Gamma_j(F_{n-1}),\]
and so does the Lie ring of $P_n$:
\begin{equation}\label{dec_Lie(P_n)}
\Lie(P_n) = \Lie(P_{n-1}) \ltimes \Lie(F_{n-1}) = \Lie(P_{n-1}) \ltimes \mathfrak L(\mathbb Z^{n-1}).
\end{equation}
Thus, $\Lie(P_n)$ decomposes as an iterated semi-direct product of free Lie algebras. With a little more work, using the classical presentation of $P_n$, we can get a presentation of this Lie ring, which is none other than the Drinfeld-Kohno Lie ring (Proposition \ref{Drinfeld-Kohno} in the appendix). We refer the reader to the original work \cite{Kohno}, or to the recent book \cite[section 10.0]{Benoit} for more on this algebra, with a somewhat different point of view.

\subsection{The Andreadakis equality}

\begin{theo}\label{Andreadakis_for_braids}
The pure braid group, embedded into $IA_n$ \emph{via} the Artin action, satisfies the Andreadakis equality:
\[\Gamma_*(P_n) = \Gamma_*(IA_n)\cap P_n = \mathcal A_* \cap P_n = \mathcal A_*(P_n, \Gamma_*(F_n)).\]
\end{theo}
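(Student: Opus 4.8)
The plan is to argue by induction on $n$, feeding the decomposition \eqref{dec_Pn} of the pure braid group as an almost-direct product $P_{n+1} = P_n \ltimes F_n$ into Theorem \ref{Reducing_Andreadakis}. The base case ($P_1$ trivial, or $P_2 \cong \Z$) is immediate. For the inductive step I view $P_{n+1} \subseteq IA_{n+1}$ through its Artin action on $F_{n+1} = \langle y_1, \dots, y_{n+1}\rangle$ (so $y_j = A_{j,n+2}$), the free normal factor being $F_n = \langle A_{1,n+1}, \dots, A_{n,n+1}\rangle$ and $K = P_n$ the Artin image of the braid group on the first $n$ strands. Hypothesis (1) of Theorem \ref{Reducing_Andreadakis} is exactly the almost-direct product property recorded in Paragraph \ref{SCD_de_Pn}, so it remains to check hypotheses (2) and (3), and both will come from a single computation of the Johnson images of the generators of $F_n$.

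Using the Artin formulas \eqref{Artin_action} and the expression $A_{i,n+1} = {}^{(\sigma_{n}\cdots\sigma_{i+1})}\sigma_i^{2}$, I would first check that $A_{i,n+1}$ fixes $y_k$ for $k \neq i, n+1$ and moves $y_i, y_{n+1}$ by conjugations whose leading terms involve $y_{n+1}$; consequently the degree-one Johnson image $\tau(\overline{A_{i,n+1}})$ is the infinitesimal braiding $t_{i,n+1}$ sending $\overline y_i \mapsto [\overline y_{n+1}, \overline y_i]$, $\overline y_{n+1} \mapsto [\overline y_i, \overline y_{n+1}]$ and the other generators to $0$. The whole step then rests on what I will call the \emph{key claim}: the evaluation $\overline w \mapsto \tau(\overline w)(\overline y_{n+1})$ is injective on $\Lie(F_n)$. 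The key claim gives both remaining hypotheses at once. Since $P_n$ fixes $y_{n+1}$, every derivation coming from $\Lie(\mathcal A_* \cap P_n)$ kills $\overline y_{n+1}$; so a nonzero element of $\Lie(F_n)$ cannot induce the same derivation as an element of $P_n$, which is the $\mathcal A_*$-disjointness of hypothesis (2) (the argument is the exact analogue of the one used for $IA_n^+$). The key claim also makes $\tau$ injective on $\Lie(F_n)$, whence, by Lemma \ref{tau_inj}, $\Gamma_* F_n = \mathcal A_*(F_n, \Gamma_* F_{n+1}) = \mathcal A_* \cap F_n$, i.e.\ the Andreadakis equality for the factor $H = F_n$.

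For the other factor $K = P_n$ I would invoke the induction hypothesis, which gives the equality for the Artin action of $P_n$ on $F_n$. The one subtlety is that here $P_n$ acts on $F_{n+1}$ rather than on $F_n$. But $P_n$ preserves $\langle y_1, \dots, y_n\rangle$ and fixes the extra generator $y_{n+1}$, so the two induced filtrations agree; I would record this separately as a stability lemma (adjoining a fixed generator does not change the Andreadakis filtration), one inclusion being formal and the other following because commutators with $y_{n+1}$ are controlled by $\beta$ fixing $y_{n+1}$. With hypotheses (1), (2), (3) verified, Theorem \ref{Reducing_Andreadakis} yields $\Gamma_*(P_{n+1}) = \mathcal A_* \cap P_{n+1}$, and the remaining equalities in the statement follow from \eqref{incl_on_subgroups} and the description \eqref{A_on_K}.

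The main obstacle is the key claim. Unwinding it, one must show that for $w \in F_n$ with nonzero class in $\Lie_k(F_n)$ the element $[w, y_{n+1}] = D_w\, y_{n+1}\, D_w^{-1}\, y_{n+1}^{-1}$ has nonzero image in $\mathfrak L_{k+1}(\Z^{n+1})$, where $D_w \in F_{n+1}$ is the conjugating cocycle of the action of $F_n$ on $y_{n+1}$; equivalently, that $t_{1,n+1}, \dots, t_{n,n+1}$ generate a free Lie subalgebra of $\Der(\mathfrak L \Z^{n+1})$ on which evaluation at $\overline y_{n+1}$ is faithful. This is precisely the freeness of the fibre of the Drinfeld--Kohno tower (compare Proposition \ref{Drinfeld-Kohno} and \cite{Kohno}). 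I expect to establish it by a Lazard-elimination argument: writing $\mathfrak L \Z^{n+1} = \mathfrak L(\overline y_{n+1}) \ltimes I$ with $I$ free on $\{(\ad \overline y_{n+1})^k(\overline y_i)\}$, I would track the leading term of $\tau(\overline w)(\overline y_{n+1})$ in the resulting filtration and verify that these leading terms remain linearly independent, so that the evaluation map is injective degree by degree.
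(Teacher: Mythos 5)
Your proposal follows the same global architecture as the paper's proof: induction on $n$, the decomposition \eqref{dec_Pn} viewed as an almost-direct product, and Theorem \ref{Reducing_Andreadakis} to reduce everything to the two factors. Where you genuinely diverge is in how hypotheses (2) and (3) are verified for the free factor. The paper never leaves the braid group: it observes that $w$ and $x_n = A_{n,n+1}$ both lie in the free subgroup $\tilde F_n = \langle A_{1,n},\dots,A_{n-1,n},A_{n,n+1}\rangle$ obtained by swapping two strands, that this second copy of $F_n$ is again the kernel of an almost-direct product decomposition of $P_{n+1}$ (so its lower central series is induced from that of $P_{n+1}$), and then applies the elementary Lemma \ref{word_lemma}, whose proof is just the computation of the centralizer of a generator in a free Lie ring. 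You instead work inside $\Der(\mathfrak L \Z^{n+1})$: you identify the Johnson images of the $A_{i,n+1}$ as the tangential (Drinfeld--Kohno) derivations $t_{i,n+1}$ and reduce everything to your ``key claim'' that $\bar w \mapsto \tau(\bar w)(\bar y_{n+1})$ is injective on $\Lie(F_n)$. That reduction is organized correctly: the key claim gives the Andreadakis equality for the factor via Lemma \ref{tau_inj}, and it also gives $\mathcal A_*$-disjointness since derivations coming from $P_n$ kill $\bar y_{n+1}$. You moreover make explicit a point the paper passes over in silence: the induction hypothesis concerns $P_n$ acting on $F_n$, whereas Theorem \ref{Reducing_Andreadakis} needs the equality for $P_n$ as a subgroup of $IA_{n+1}$; your stability lemma (one inclusion formal, the other via the retraction $F_{n+1} \twoheadrightarrow F_n$ killing the fixed generator) is the right bridge, and the paper implicitly needs it too.

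The one real gap is that the key claim --- the heart of the whole argument --- is announced rather than proved (``I expect to establish it by a Lazard-elimination argument''). The claim is true, and your strategy can be completed: grading $\mathfrak L(\bar y_1,\dots,\bar y_{n+1})$ by the degree in $\bar y_{n+1}$ and splitting each $t_{i,n+1}$ into its degree-preserving and degree-raising parts, an induction on the length of Lie words, using the Jacobi identity and the fact that the degree-preserving parts kill $\mathfrak L(\bar y_1,\dots,\bar y_n)$, shows that the lowest $\bar y_{n+1}$-degree component of $\tau(\bar w)(\bar y_{n+1})$ is exactly $[\bar y_{n+1}, \bar w(\bar y_1,\dots,\bar y_n)]$; this is nonzero because the centralizer of $\bar y_{n+1}$ in the free Lie ring is $\Z \bar y_{n+1}$, so injectivity holds degree by degree. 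Note, however, that this derivation-algebra analysis is precisely what the paper's strand-swapping trick renders unnecessary: since $w$ and $x_n$ sit inside the single free group $\tilde F_n$ whose lower central series is induced from $P_{n+1}$, the required nonvanishing becomes a statement internal to one free Lie ring (Lemma \ref{word_lemma}), with no need to control the subalgebra of $\Der(\mathfrak L\Z^{n+1})$ generated by the $t_{i,n+1}$. Finally, one factual slip: $A_{i,n+1}$ does not fix $y_k$ for $i<k<n+1$; it conjugates it by an element of $\Gamma_2(F_{n+1})$, namely a commutator $[y_i,y_{n+1}]^{\pm 1}$. Since that conjugator has trivial degree-$1$ contribution, your formula for the degree-one Johnson image $t_{i,n+1}$ remains correct, but the verification should be stated that way.
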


\begin{proof}
We apply theorem \ref{Reducing_Andreadakis} to the decomposition $P_n = P_{n-1} \ltimes F_{n-1}$ described above. $\mathcal A_*$-disjointness is easy to verify: any $\beta \in P_{n-1}$ commutes with braids on the strings $n$ and $n+1$, so $[\beta, x_n]=1$ (where $x_n = A_{n,n+1}$), whereas no $w$ in $F_{n-1} = \langle A_{1, n}, ..., A_{n-1, n}\rangle$ can commute with $x_n$, because $\langle A_{1, n}, ..., A_{n-1, n}, A_{n,n+1}\rangle$ is a free group (it is the one obtained by exchanging the roles of the strings $n$ and $n+1$ in the arguments above). Thus no $[\bar\beta,-]$ can coincide with some non-trivial $[\bar w,-]$.

In order to show that the factor $F_{n-1}$ satisfies the Andreadakis equality (from which the result follows by induction, using Theorem \ref{Reducing_Andreadakis}), we need the following lemma :
\begin{lem}\label{word_lemma}
Let $w \in F_n$ such that for some $i$, $[w, x_i] \in \Gamma_{j+1}(F_n)$. Then: 
\[\exists n \in \mathbb Z,\ wx_i^n \in \Gamma_j(F_n).\]
In particular, if $w \in \langle x_1, ..., \hat x_i, ..., x_n \rangle$, or if $w \in \Gamma_2(F_n)$, then $n$ must be $0$, so that $w \in \Gamma_j(F_n)$.
\end{lem}

Let $w \in \mathcal A_j(F_n) \cap \langle A_{1, n}, ..., A_{n-1, n} \rangle = F_{n-1}$. Then $[w, x_n]$ is in $\Gamma_{j+1}(F_n)$, which is contained in  $\Gamma_{j+1}(P_{n+1})$, by the above calculation of the lower central series of $P_{n+1}$. 
Observe that $w$ and $x_n$ belong to $\langle A_{1, n}, ..., A_{n-1, n}, A_{n,n+1}\rangle$, which is another copy of the free group on $n$ generators in the braid group, that will be called $\tilde F_n$. Exchanging the roles of the strings $n$ and $n+1$ in the previous paragraph give an almost-direct product decomposition $P_{n+1} = P_n \ltimes \tilde F_n$ and, using Proposition \ref{SCD_de_Pn}:
\[\Gamma_{j+1}(P_{n+1}) \cap \tilde F_n = \Gamma_{j+1}(\tilde F_n).\]
Thus $[w, x_n]\in \Gamma_{j+1}(\tilde F_n)$. But since $w \in \langle A_{1, n}, ..., A_{n-1, n} \rangle = F_{n-1}$, the generator $x_n = A_{n, n+1}$ of $\tilde F_n$ does not appear in $w$. We deduce the conclusion we were looking for, using Lemma \ref{word_lemma}:
$w \in \Gamma_j(\tilde F_n) \cap F_{n-1} = \Gamma_j(F_{n-1}) \subseteq \Gamma_j(P_n).$
\end{proof}

\begin{proof}[Proof of Lemma \ref{word_lemma}]
Let $k$ such that $w \in \Gamma_k -\Gamma_{k+1}$ (such a $k$ exists since $F_n$ is residually nilpotent). If we had $2 \leq k < j$, then $[\bar w, x_i] = 0$ inside $\Lie_{k+1}(F_n) = \mathfrak L_{k+1}(V)$, since $k+1 < j+1$. Hence $\bar w$ would be a non-trivial element of the centralizer $C(x_i)$ of $x_i$ in the free Lie algebra $\mathfrak L(V)$. But $C(x_i) = \mathbb Z x_i \subseteq \mathfrak L_1(V)$, so $k$ must be $1$, which contradicts our hypothesis.

If $w \in F_n -\Gamma_2$, then $\bar w \in C(x_i) = \mathbb Z x_i$, so there is an $n$ such that $w \equiv x_i^n \pmod{\Gamma_2}$. Then $wx_i^{-n}$ is in $\Gamma_2$ and it satisfies the hypothesis of the lemma, so it is in $\Gamma_j$, using the first part of the proof.
\end{proof}

\begin{rmq}[G. Massuyeau]
The Andreadakis filtration on braids is none other than the one given by vanishing of the first Milnor invariants. Precisely, a pure braid $\beta$ acts by conjugation on the (fixed) basis of the free group, $x_i$ being acted upon by the \emph{parallel} $w_i$ \cite{Milnor, Ohkawa}. The braid is in $\mathcal A_j$ if and only if the parallel is in $\Gamma_j F_n$ (use the above proof), that is, if and only if its Magnus expansion is in $1 + (X_1, ... X_n)^k$. This last condition is exactly the vanishing of its Milnor invariants of length less than $k$. Thus, our Theorem \ref{Andreadakis_for_braids} can also be interpreted as a consequence of two facts well-known to knot theorists: Milnor invariants of length at most $d+1$ of pure braids generate Vassiliev invariants of degree at most $d$ ; a braid is in $\Gamma_{d+1}P_n$ if and only if it is undetected by Vassiliev invariants of degree at most $d$ \cite{Habegger, Mostovoy}.
\end{rmq}

\begin{subappendices}

\section{Appendix: Generating sets and relations}\label{appendix_gen}

A finite set of generators of $IA_n$ has been known for a long time \cite{Nielsen} -- see also \cite[5.6]{BBM}. These are:
\begin{equation}\label{gen_of_IAn}
K_{ij}: x_t \longmapsto \begin{cases}
                                 x_j x_i x_j^{-1} &\text{if } t = i \\
						                     x_t              &\text{else }    
													 \end{cases}
\ \ \text{ and }\ \ 
K_{ijk }: x_t \longmapsto \begin{cases}
                                [x_j, x_k] x_i &\text{if } t = i \\
						                     x_t              &\text{else. }    
														\end{cases}
\end{equation}

\subsection{Generators of \texorpdfstring{$IA_n^+$}{IAn}}

We give a family of generators of the group of triangular automorphisms:
\begin{lem}\label{gen_de_IA+}
$IA_n^+$ is generated by the following elements:
\[\left\{\begin{array}{llll}
         K_{ij}: x_i &\longmapsto  & x_i^{x_j} & \text{ for } j < i, \\
		 K_{ijk}: x_i &\longmapsto & x_i[x_j,x_k]   & \text{ for } j,k < i, 
		 \end{array} \right.\]
\end{lem}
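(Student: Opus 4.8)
The plan is to argue by induction on $n$, using the split short exact sequence \eqref{dec_IA+} and the resulting decomposition \eqref{dec_IA+_psd}, namely $IA_n^+ = (\Gamma_2(F_{n-1}) \rtimes F_{n-1}) \rtimes IA_{n-1}^+$. The base case $n = 1$ is trivial, since then $F_0$ is trivial and $IA_1^+ = \{1\}$ is generated by the empty family. For the inductive step, write $N$ for the kernel $\Gamma_2(F_{n-1}) \rtimes F_{n-1}$ of the projection $IA_n^+ \twoheadrightarrow IA_{n-1}^+$, consisting of the automorphisms fixing $x_1, \dots, x_{n-1}$ and sending $x_n \mapsto (x_n^w)\gamma$. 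Since the sequence is split by the inclusion of the automorphisms fixing $x_n$, the group $IA_n^+$ is generated by $N$ together with the image of $IA_{n-1}^+$; so it suffices to show that each of these two pieces lies in the subgroup generated by the proposed family.

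For the factor $IA_{n-1}^+$, I would first observe that the chosen section sends each generator $K_{ij}$ (resp.\ $K_{ijk}$) of $IA_{n-1}^+$, with all indices $\leq n-1$, to the automorphism of $F_n$ given by the same formula and fixing $x_n$ — that is, to the corresponding generator $K_{ij}$ (resp.\ $K_{ijk}$) in our list for $IA_n^+$. By the induction hypothesis $IA_{n-1}^+$ is generated by these, so its image under the section is contained in the subgroup generated by the proposed family.

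It remains to generate the kernel $N \cong \Gamma_2(F_{n-1}) \rtimes F_{n-1}$, via the identification $(\psi \colon x_n \mapsto (x_n^w)\gamma) \leftrightarrow (\gamma, w)$. Under it, $K_{nj}$ corresponds to $(1, x_j)$ and $K_{njk}$ to $([x_j,x_k], 1)$. A short computation with the composition law of $N$ shows that the $F_{n-1}$-factor acts on the $\Gamma_2(F_{n-1})$-factor by conjugation inside $F_{n-1}$, so that conjugating $([x_j,x_k],1) = K_{njk}$ by a word in the $K_{nj}$ produces $(u\,[x_j,x_k]\,u^{-1}, 1)$ for an arbitrary $u \in F_{n-1}$. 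Since the $K_{nj}$ clearly generate the whole $F_{n-1}$-factor, the subgroup generated by the $K_{nj}$ and $K_{njk}$ contains the $F_{n-1}$-factor together with the normal closure in $F_{n-1}$ of the elementary commutators $[x_j, x_k]$.

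The one genuinely substantive point — and the step I expect to be the crux — is the classical fact that this normal closure is all of $\Gamma_2(F_{n-1}) = [F_{n-1}, F_{n-1}]$: quotienting $F_{n-1}$ by the normal closure of the $[x_j, x_k]$ forces the generators to commute and so yields the abelianization $\Z^{n-1}$, whence the normal closure coincides with the commutator subgroup. Granting this, the generated subgroup exhausts $N$, and combining with the previous paragraph completes the induction.
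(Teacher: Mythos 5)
Your proof is correct, and its skeleton is the same as the paper's: both arguments reduce to the subgroup of triangular automorphisms moving only one basis element, identify that subgroup with $\Gamma_2(F_{i-1}) \rtimes F_{i-1}$, note that the $K_{ij}$ generate the $F_{i-1}$-factor, and obtain the $\Gamma_2(F_{i-1})$-factor from conjugates of the $K_{ijk}$ by elements of that factor. The differences are in packaging and in one step. First, you organize the reduction as an induction on $n$ along the split exact sequence \eqref{dec_IA+}, whereas the paper factors an arbitrary $\varphi \in IA_n^+$ directly as $\varphi_n \circ \cdots \circ \varphi_2$, with $\varphi_i$ moving only $x_i$; these are the same decomposition, merely unrolled. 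Second, and more substantively, the crux --- that every element of $\Gamma_2(F_{n-1})$ is a product of conjugates of the elementary commutators $[x_j,x_k]$ --- is justified differently: you invoke the normal-closure/abelianization argument (the quotient of $F_{n-1}$ by the normal closure of the $[x_j,x_k]$ is abelian, so that normal closure contains, hence equals, $\Gamma_2(F_{n-1})$), while the paper rewrites a given $\gamma_i \in \Gamma_2(F_{i-1})$ explicitly as $\prod_k [x_{\alpha_k},x_{\beta_k}]^{\omega_k}$ using the identities $[a,bc]=[a,b]\cdot({}^b\![a,c])$, $[a,b]^{-1}=[b,a]$ and $[a^{-1},b]=[b,a]^a$, and then writes $\varphi_i$ as an explicit word $c_{i,w_i} \circ K_{i,\alpha_1,\beta_1}^{c_{i,\omega_1}} \circ \cdots \circ K_{i,\alpha_m,\beta_m}^{c_{i,\omega_m}}$. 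Your version of this step is shorter and cleaner; the paper's is constructive, producing an explicit expression of any triangular automorphism in the generators. Both are complete proofs; there is no gap in yours.
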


\begin{proof}
Let $G$ be the subgroup generated by the above elements.
Let $\varphi \in IA_n^+$, sending each $x_i$ on some $(x_i^{w_i})\gamma_i$ (as in Definition \ref{def_triangulaires}). We can decompose $\varphi$ as $\varphi = \varphi_n \circ \cdots \circ \varphi_2$, where $\varphi_i$ fixes all elements of the basis, except for the $i$-th one, that is sends to $(x_i^{w_i})\gamma_i$. We claim that each $\varphi_i$ is in $G$.
Let us fix $i$, and consider only automorphisms fixing all elements of the basis, save the $i$-th one. One can check that the subgroup of $G$ generated by the $K_{ij}$ ($j<i$) is the set of automorphism of the form $c_{i, w}: x_i \longmapsto x_i^w$ (for $w \in F_{i-1}$).Using the formulas $[a,bc] =[a,b]\cdot({}^b\![a,c])$, $[a,b]^{-1} = [b,a]$ and $[a^{-1}, b] = [b,a]^a$, we can decompose the product of commutators $\gamma_i$ as:
\[\gamma_i = \prod\limits_{k = 1}^m [x_{\alpha_k}, x_{\beta_k}]^{\omega_k},\]
where $\omega_k \in F_{i-1}$.
Thus
$\varphi_i = c_{i, w_i} \circ K_{i,\alpha_1, \beta_1}^{c_{i, \omega_1}} \circ \cdots \circ K_{i,\alpha_m, \beta_m}^{c_{i, \omega_m}},$
whence the desired conclusion.
\end{proof}

\begin{rmq}
Fix $i \geq 1$. From the above proof, we see that the automorphisms:
\[\left\{\begin{array}{lllll}
                 x_i &\longmapsto & x_i^{x_j} & \text{ pour } j < i  
                 & (x_j \in F_{i-1}),\\
				 x_i &\longmapsto & x_i[x_j,x_k]   & \text{ pour } j,k < i
				 & ([x_j,x_k] \in \Gamma_2(F_{i-1})),
				        \end{array} \right.\]
generate the subgroup of $IA_n^+$ of triangular automorphisms fixing every element of the basis, save the $i$-th one.
\end{rmq}

\subsection{Presentation of the Drinfeld-Kohno Lie ring}

The Drinfeld-Kohno Lie ring is the Lie ring $\Lie(P_n)$. Our methods can be used to recover the usual presentation of this Lie ring:

\begin{prop}\label{Drinfeld-Kohno}
The Lie ring of $P_n$ is generated by $t_{ij}$ ($1 \leq i , j \leq n$), under the relations:
\[\begin{cases}
t_{ij} = t_{ji},\ t_{ii} = 0 &\forall i,j,\\ 
[t_{ij}, t_{ik} + t_{kj}] = 0 &\forall i, j, k,\\
[[t_{ij}, t_{kl}] = 0 &\text{si } \{i,j\} \cap \{k,l\} = \varnothing.
\end{cases}\]
\end{prop}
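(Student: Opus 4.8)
The plan is to recover the Drinfeld--Kohno presentation by combining the iterated almost-direct product decomposition of $\Lie(P_n)$ already established in \eqref{dec_Lie(P_n)} with the classical group presentation of $P_n$. First I would set up the dictionary between the braid generators and the Lie generators: define $t_{ij}$ to be the class of $A_{ij}$ in $\Lie_1(P_n) = P_n^{ab}$, so that the symmetry $t_{ij} = t_{ji}$ and the degeneracy $t_{ii} = 0$ are immediate (the latter being a mere convention, since $A_{ii}$ is not a generator). The key structural input is that, by \eqref{dec_Lie(P_n)}, the abelianization $P_n^{ab}$ is free abelian on the $\binom{n}{2}$ classes $t_{ij}$ with $i < j$; this tells us the Lie ring is generated in degree one by exactly these elements and fixes the rank of $\Lie_1$.

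Next I would verify that the two families of relations hold in $\Lie(P_n)$ by reducing them to the classical relations in $P_n$ --- namely the commutation relations $[A_{ij}, A_{kl}] = 1$ for disjoint index sets and the triangle relations expressing how $A_{ij}$ conjugates the free subgroup. Passing a group commutator relation to the Lie bracket via the functor $\Lie$, the disjointness relation $[A_{ij}, A_{kl}] = 1$ gives $[t_{ij}, t_{kl}] = 0$ when $\{i,j\} \cap \{k,l\} = \varnothing$, and the triangle relation gives $[t_{ij}, t_{ik} + t_{kj}] = 0$. Here one must be careful that a relation holding in $P_n$ only yields the corresponding bracket relation modulo higher terms, but since both relations are homogeneous of degree two and $\Lie_2$ is a quotient of $\Lambda^2(P_n^{ab})$, the degree-two identities are exactly the images of the defining relations.

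The substantive step is to show these relations are \emph{sufficient}, i.e.\ that the abstract Lie ring $L$ presented by the stated generators and relations surjects onto $\Lie(P_n)$ and that this surjection is an isomorphism. Surjectivity is clear from generation in degree one. For injectivity I would exploit the semi-direct product structure: the projection $P_n \to P_{n-1}$ forgetting the last strand induces, by the exactness of $\Lie$ (Proposition \ref{exactness_of_L}) and the almost-direct decomposition, a short exact sequence $\mathfrak L(\Z^{n-1}) \hookrightarrow \Lie(P_n) \twoheadrightarrow \Lie(P_{n-1})$. I would prove by induction on $n$ that the abstract ring $L$ admits a matching decomposition, by checking that the subring generated by $t_{in}$ ($i < n$) is free on those generators (the triangle and disjointness relations involving the index $n$ reduce, modulo the ideal generated by the lower $t_{ij}$, precisely to the defining relations of a free Lie algebra acted on by $L_{n-1}$) and that $L$ is the iterated semi-direct product of these free pieces. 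Matching Hilbert series on each graded piece then forces the surjection $L \twoheadrightarrow \Lie(P_n)$ to be an isomorphism.

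The main obstacle I anticipate is the inductive identification of the ``new'' free summand inside $L$: one must show that, in the abstractly presented ring, the relations involving the top index $n$ do not impose any extra constraints on the free Lie subalgebra beyond those already recorded in $\Lie(P_{n-1})$, so that the almost-direct product structure of $\Lie(P_n)$ is genuinely reproduced by $L$ rather than merely covered by it. This is a standard but delicate diamond/rewriting argument showing that the relations are a confluent presentation compatible with the filtration by the top strand; handling the conjugation action of $L_{n-1}$ on the new free generators $t_{in}$ correctly is where the care lies, and it is precisely the point at which the explicit form of the triangle relation $[t_{ij}, t_{ik} + t_{kj}] = 0$ is used to control the action.
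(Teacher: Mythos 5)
Your skeleton is the same as the paper's: obtain a surjection $u_n\colon \mathfrak p_n \twoheadrightarrow \Lie(P_n)$ from the presented Lie ring by checking the relations on the classes of the $A_{ij}$ (this part of your proposal is fine, and the relations do pass exactly, not just modulo higher terms), then induct on $n$ by comparing the sequence $\mathfrak k_n \hookrightarrow \mathfrak p_n \overset{\pi_n}{\twoheadrightarrow} \mathfrak p_{n-1}$ (where $\pi_n$ kills the $t_{in}$) with the sequence $\Lie(F_{n-1}) \hookrightarrow \Lie(P_n) \twoheadrightarrow \Lie(P_{n-1})$ coming from the almost-direct product decomposition \eqref{dec_Lie(P_n)}. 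The gap is exactly at the step you flag as the main obstacle: proving that the subring of the \emph{presented} ring generated by the $t_{in}$ is free. The justification you sketch fails as stated: modulo the ideal generated by the lower $t_{jk}$ ($j,k<n$), the triangle relations $[t_{in}, t_{ij}+t_{jn}]=0$ collapse to $[t_{in},t_{jn}]=0$, so the images of the $t_{in}$ generate an \emph{abelian} Lie ring, not a free one; that quotient only detects degree-$1$ independence. What remains is your ``standard but delicate diamond/rewriting argument,'' which is left entirely unexecuted and is in fact the whole difficulty (confluence for Lie ring presentations over $\Z$ is not routine); the Hilbert-series finish would in addition require knowing that every graded piece of both rings is a free abelian group of finite rank.

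The point you are missing is that freeness of the kernel never has to be established inside the presentation; it is \emph{deduced} from the comparison map, which reverses the logical order of your argument. The relations give directly that the Lie subring of $\mathfrak p_n$ generated by the $t_{in}$ is an ideal: for $i\notin\{j,k\}$ one has $[t_{jk},t_{in}]=0$ by disjointness, and $[t_{ij},t_{in}]=[t_{in},t_{jn}]$ by the triangle relation, so bracketing with any degree-one generator stays in that subring. Hence $\mathfrak k_n = \ker(\pi_n)$, which is the ideal generated by the $t_{in}$, is already \emph{generated as a Lie ring} by the $t_{in}$. Now the restriction $v_n\colon \mathfrak k_n \to \Lie(F_{n-1})$ of $u_n$ sends these generators to a free basis of the genuine free Lie ring $\Lie(F_{n-1})\cong\mathfrak L(\Z^{n-1})$; the universal property of the free Lie ring yields $s\colon \mathfrak L(\Z^{n-1}) \to \mathfrak k_n$ with $v_n s = \mathrm{id}$, and $s v_n$ fixes the generators $t_{in}$ of $\mathfrak k_n$, hence equals the identity. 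So $v_n$ is an isomorphism --- and only now do you know, a posteriori, that $\mathfrak k_n$ is free --- and the five lemma applied to the map of extensions, with the inductive hypothesis that $u_{n-1}$ is an isomorphism, gives that $u_n$ is an isomorphism. No rewriting and no Hilbert series are needed: injectivity on the kernel comes for free from surjecting, generators to basis, onto an object already known to be free.
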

\begin{proof}
We use the classical presentation of the pure braid group (see for instance \cite{Birman-Brendle}) given, for $r<s<n$ and $i<n$, denoting $A_{\alpha, n}$ by $A_{\alpha, n}$:
\[[A_{rs},x_i] = [A_{rs},A_{in}] = 
\left\{
\begin{array}{l}
1 \text{ si } s<i \text{ ou } i<r,\\ 

[x_i^{-1},x_r^{-1}] \text{ si } s=i, \\ 

[x_s^{-1}, x_i] \text{ si } r=i,\\

[[x_r,x_s]^{-1},x_i] \text{ si } r<i<s.
\end{array}
\right.\]
These relations are easily verified ; that they are enough to describe pure braids comes from the decomposition into free factors. We use a similar reasoning to show our proposition.

Let $\mathfrak p_n$ be the Lie ring defined by the presentation of the proposition. The relations of the above presentations of $P_n$ imply that the classes of $A_{ij}$ in $\Lie(P_n)$ (which generate it by \ref{engdeg1}) satisfy these relations, whence a morphism $u_n$ from $\mathfrak p_n$ onto $\Lie(P_n)$. We also can define a split epimorphism $\pi_n$ from $\mathfrak p_n$ onto $\mathfrak p_{n-1}$ by sending $t_{ij}$ on $0$ if $n \in \{i,j\}$, and on $t_{ij}$ else. Denote by $\mathfrak k_n$ the kernel of $\pi_n$, and consider the diagram:
\[\begin{tikzcd}
\mathfrak k_n \ar[r, hook] \ar[d, "v_n"] &\mathfrak p_n \ar[r, two heads, "\pi_n"] \ar[d, "u_n"] &\mathfrak p_{n-1} \ar[d, "u_{n-1}"] \\
\Lie(F_{n-1}) \ar[r, hook] &\Lie(P_n) \ar[r, two heads] &\Lie(P_{n-1})
\end{tikzcd}\]
where the second line comes from the decomposition \eqref{dec_Lie(P_n)}.
The algebra $\mathfrak k_n$ is the ideal of $\mathfrak p_n$ generated by the $t_{in}$. Since the subalgebra generated by the $t_{in}$ already is an ideal, $\mathfrak k_n$ is generated by the $t_{in}$ as a Lie algebra. Since $v_n$ sends the $t_{in}$ on a basis of the free Lie algebra $\Lie(F_{n-1})$, it is an isomorphism (the universal property of the free Lie algebra gives an inverse). Remark that $u_1$ obviously is an isomorphism. By induction, using the five lemma, we deduce that $u_n$ is an isomorphism.
\end{proof}

\end{subappendices}

\bibliographystyle{alpha}
\bibliography{Ref_SS-groupes}

\begin{thebibliography}{CPVW08}

\bibitem[And65]{Andreadakis}
Stylianos Andreadakis.
\newblock On the automorphisms of free groups and free nilpotent groups.
\newblock {\em Proc. London Math. Soc. (3)}, 15:239--268, 1965.

\bibitem[Bar13]{Bartholdi}
Laurent Bartholdi.
\newblock Automorphisms of free groups. {I}.
\newblock {\em New York J. Math.}, 19:395--421, 2013.

\bibitem[Bar16]{Bartholdi1}
Laurent Bartholdi.
\newblock Automorphisms of free groups. {I}---erratum [ {MR}3084710].
\newblock {\em New York J. Math.}, 22:1135--1137, 2016.

\bibitem[BB04]{BB}
Francis Borceux and Dominique Bourn.
\newblock {\em Mal'cev, protomodular, homological and semi-abelian categories},
  volume 566 of {\em Mathematics and its Applications}.
\newblock Kluwer Academic Publishers, Dordrecht, 2004.

\bibitem[BB05]{Birman-Brendle}
Joan~S. Birman and Tara~E. Brendle.
\newblock Braids: a survey.
\newblock In {\em Handbook of knot theory}, pages 19--103. Elsevier B. V.,
  Amsterdam, 2005.

\bibitem[BB07]{Borceux-Bourn}
Francis Borceux and Dominique Bourn.
\newblock Split extension classifier and centrality.
\newblock In {\em Categories in algebra, geometry and mathematical physics},
  volume 431 of {\em Contemp. Math.}, pages 85--104. Amer. Math. Soc.,
  Providence, RI, 2007.

\bibitem[BBM07]{BBM}
Mladen Bestvina, Kai-Uwe Bux, and Dan Margalit.
\newblock Dimension of the {T}orelli group for {${\rm Out}(F_n)$}.
\newblock {\em Invent. Math.}, 170(1):1--32, 2007.

\bibitem[BEH08]{BBH}
Laurent Bartholdi, Bettina Eick, and Ren\'e Hartung.
\newblock A nilpotent quotient algorithm for certain infinitely presented
  groups and its applications.
\newblock {\em Internat. J. Algebra Comput.}, 18(8):1321--1344, 2008.

\bibitem[Bir74]{Birman}
Joan~S. Birman.
\newblock {\em Braids, links, and mapping class groups}.
\newblock Princeton University Press, Princeton, N.J.; University of Tokyo
  Press, Tokyo, 1974.
\newblock Annals of Mathematics Studies, No. 82.

\bibitem[BJK05]{BJK}
Francis Borceux, George Janelidze, and Gregory~M. Kelly.
\newblock On the representability of actions in a semi-abelian category.
\newblock {\em Theory Appl. Categ.}, 14:No. 11, 244--286, 2005.

\bibitem[CHP11]{CHP}
F.~R. {Cohen}, Aaron {Heap}, and Alexandra {Pettet}.
\newblock {On the Andreadakis-Johnson filtration of the automorphism group of a
  free group.}
\newblock {\em {J. Algebra}}, 329(1):72--91, 2011.

\bibitem[CPVW08]{CPVW}
Frederick~R. Cohen, Jonathan~N. Pakianathan, Vladimir~V. Vershinin, and J.~Wu.
\newblock Basis-conjugating automorphisms of a free group and associated {L}ie
  algebras.
\newblock In {\em Groups, homotopy and configuration spaces}, volume~13 of {\em
  Geom. Topol. Monogr.}, pages 147--168. Geom. Topol. Publ., Coventry, 2008.

\bibitem[Dar18]{Darné}
Jacques Darn\'e.
\newblock On the stable {A}ndreadakis problem.
\newblock {\em arXiv:1711.05991}, 2018.

\bibitem[DP16]{Day-Putman}
Matthew~B. Day and Andrew Putamn.
\newblock On the second homology group of the torelli subgroup of ${A}ut(f_n)$.
\newblock arXiv:1408.6242v3, 2016.

\bibitem[Fre17]{Benoit}
Benoit Fresse.
\newblock {\em Homotopy of operads and {G}rothendieck-{T}eichm\"uller groups.
  {P}art 1}, volume 217 of {\em Mathematical Surveys and Monographs}.
\newblock American Mathematical Society, Providence, RI, 2017.
\newblock The algebraic theory and its topological background.

\bibitem[HL11]{Hartl}
Manfred Hartl and Bruno Loiseau.
\newblock A characterization of finite cocomplete homological and of
  semi-abelian categories.
\newblock {\em Cah. Topol. G\'eom. Diff\'er. Cat\'eg.}, 52(1):77--80, 2011.

\bibitem[HM00]{Habegger}
Nathan Habegger and Gregor Masbaum.
\newblock The {K}ontsevich integral and {M}ilnor's invariants.
\newblock {\em Topology}, 39(6):1253--1289, 2000.

\bibitem[KM97]{Krstic}
Sava Krsti\'c and James McCool.
\newblock The non-finite presentability of {${\rm IA}(F_3)$} and {${\rm
  GL}_2({\bf Z}[t,t^{-1}])$}.
\newblock {\em Invent. Math.}, 129(3):595--606, 1997.

\bibitem[Koh85]{Kohno}
Toshitake Kohno.
\newblock S\'erie de {P}oincar\'e-{K}oszul associ\'ee aux groupes de tresses
  pures.
\newblock {\em Invent. Math.}, 82(1):57--75, 1985.

\bibitem[Laz54]{Lazard}
Michel Lazard.
\newblock Sur les groupes nilpotents et les anneaux de {L}ie.
\newblock {\em Ann. Sci. Ecole Norm. Sup. (3)}, 71:101--190, 1954.

\bibitem[Mil57]{Milnor}
John Milnor.
\newblock Isotopy of links. {A}lgebraic geometry and topology.
\newblock In {\em A symposium in honor of {S}. {L}efschetz}, pages 280--306.
  Princeton University Press, Princeton, N. J., 1957.

\bibitem[MW02]{Mostovoy}
Jacob Mostovoy and Simon Willerton.
\newblock Free groups and finite-type invariants of pure braids.
\newblock {\em Math. Proc. Cambridge Philos. Soc.}, 132(1):117--130, 2002.

\bibitem[Nie24]{Nielsen}
Jakob Nielsen.
\newblock Die {I}somorphismengruppe der freien {G}ruppen.
\newblock {\em Math. Ann.}, 91(3-4):169--209, 1924.

\bibitem[Ohk82]{Ohkawa}
Tetsusuke Ohkawa.
\newblock The pure braid groups and the {M}ilnor {$\bar \mu $}-invariants of
  links.
\newblock {\em Hiroshima Math. J.}, 12(3):485--489, 1982.

\bibitem[Reu03]{Reutenauer}
Christophe Reutenauer.
\newblock Free {L}ie algebras.
\newblock In {\em Handbook of algebra, {V}ol. 3}, pages 887--903.
  North-Holland, Amsterdam, 2003.

\bibitem[Sat17]{Satoh_triangulaire}
Takao Satoh.
\newblock On the {A}ndreadakis conjecture restricted to the
  ``lower-triangular'' automorphism groups of free groups.
\newblock {\em J. Algebra Appl.}, 16(5):1750099, 31, 2017.

\end{thebibliography}

\end{document}